\newcommand{\NN}{\mathbb{N}}
\newcommand{\ZZ}{\mathbb{Z}}
\newcommand{\QQ}{\mathbb{Q}}
\newcommand{\KK}{\mathbb{K}}
\newcommand\br[1]{\left(#1\right)}
\newcommand\abs[1]{\left|#1\right|}
\newcommand\norm[1]{\left\|#1\right\|}
\newcommand\set[1]{\left\{#1\right\}}
\newcommand\landauO[1]{\mathop{O}\left(#1\right)}
\newcommand\landausmallo[1]{\mathop{o}\left(#1\right)}
\newcommand\landauOmega[1]{\mathop{\Omega}\left(#1\right)}
\newcommand\landauTheta[1]{\mathop{\Theta}\left(#1\right)}
\newcommand\myvec[1]{ \mathbf{#1} }
\newtheorem{thm}{Theorem}
\newtheorem{lem}[thm]{Lemma}
\newtheorem{cor}[thm]{Corollary}
\begin{document}

\title[Split Thue equations factorised by linear recurrence sequences]{On split families of Thue equations with linear recurrence sequences as factors}

\author[T. Hilgart]{Tobias Hilgart}
\email{tobias.hilgart@plus.ac.at}

\subjclass[2020]{11D25, 11D57, 11D61}

\keywords{family of Thue equations, diophantine
equations, exponential diophantine equations}

\begin{abstract}
    We consider a parametrised family of Thue equations,
    \[
        (x-G_1(n)\, y) \cdots (x-G_d(n)\, y) - y^d = \pm 1,
    \]
    which was first considered by Thomas to have an explicit set of solutions for parameters $n$ larger than some effectively computable constant.

    In the case where the parameter functions are polynomials belonging to an explicitly described family, this is known to be true. We consider other parameter functions, namely linear recurrence sequences, for which it is not obvious that a similar result holds, and confirm that it does for an explicitly described family of linear recurrence sequences.
\end{abstract}

\maketitle

%
%
\section{Introduction}
\label{sec:intro}

    Thue equations, i.e. integer equations of the form $f(x,y) = m$ for an irreducible homogeneous and at least cubic polynomial $f$, are an interesting object of study. Thue~\cite{thue09} used his improvement of Liouville's original result on the approximatability of algebraic numbers to prove that such equations can have at most finitely many integer solutions. Suddenly, a large class of Diophantine equations was proved to be decidable (compared to Hilbert's 10th problem, the answer to which famously concludes that the class of all Diophantine equations is not). To the best of the author's knowledge, it is currently unknown whether even the class of all bivariate Diophantine equations is decidable.

    But for Thue equations we can do even better. Baker~\cite{baker68} used his celebrated work on lower bounds for linear forms in logarithms to prove Thue's original result in an effective way. There was now an algorithm that gave all solutions to any given Thue equation (by computing an upper bound on the absolute value of any solution); let us call this property effectively solvable.

    Baker's work has inspired many authors to consider various different generalisations of effectively solvable equations, such as dropping the homogeneity condition for certain families~\cite{ellison72} or considering inequalities~\cite{petho87}. One of the first to successfully approach parametrised Thue equations (with positive discriminant), i.e. where the coefficients of the Thue equation are themselves polynomials in one or more variables, was Thomas~\cite{tho90}. Since then, various authors and Thomas himself considered many different parametrised Thue equations (for a survey, see \cite{heub05}).

    Thomas himself considered a parametrised Thue equation of arbitrary degree, where for monic polynomials $p_2(t), \ldots, p_d(t) \in \ZZ[t]$ he looked at
    \begin{equation}\label{eq:splitFamily}
        x(x-p_2(n)\, y) \cdots (x-p_d(n)\, y) + u\, y^d = 1, \qquad u=\pm 1,
    \end{equation}
    which he called a \textit{split family} of Thue equations with factors $p_2, \ldots, p_d$. This family always has the trivial solutions
    \[
        \epsilon \set{ (1, 0), (0, u), (p_2(n) u, u), \ldots, (p_d(n) u, u) },
    \]
    where $\epsilon = 1$ for odd $d$ and $\epsilon \pm 1$ for even $d$. He conjectured that if
    \[
        0 < \deg p_2 < \cdots < \deg p_d,
    \]
    then the split family of Thue equations with factors $p_2, \ldots, p_d$ is effectively solvable and has only the solutions given above. The condition is necessary because there are known examples of split families that do not satisfy this condition and have additional solutions not covered by Thomas' set. In the case of cubic split families, Thomas himself proved the conjecture under some additional technical conditions, which restrict the polynomials to a family he called \textit{regular}.

    Halter-Koch et al.~\cite{hakolepeti99} considered a special case of Thomas' conjecture where $p_2, \ldots, p_{d-1}$ are distinct integers and $p_d$ is an integral parameter. They proved that in this case the Thomas conjecture (for a sufficiently large parameter $p_d$) follows from the Lang-Waldschmidt conjecture.

    Heuberger and Tichy considered a multivariate version of Equation~\eqref{eq:splitFamily}, where now $p_i \in \ZZ[t_1, \ldots, t_r]$ and they allowed for a non-zero first polynomial $p_1$. For $\text{LH}(p)$, which they called the homogeneous part of maximal degree in $p$, they gave conditions
    \begin{enumerate}
        \item\label{cond:heubtichy-degree} $\deg p_1 < \cdots < \deg p_{d-2} < \deg p_{d-1} = \deg p_d$, 
        \item\label{cond:heubtichy-LH} $\text{LH}(p_{d-1}) = \text{LH}(p_d)$, while still $p_{d-1} \neq p_d$, and
        \item\label{cond:heubtichy-growth} for any $p \in \set{p_1, \ldots, p_d, p_d-p_{d-1}}$ there exists constants $t_p, c_p$ such that whenever $t_1, \ldots, t_r \geq t_p$ it holds that
        \[
            \abs{ \text{LH}(p(t_1, \ldots, t_r)) } \geq c_p \cdot \br{\min_{k \in \set{1, \ldots, r}}\set{ t_k }}^{\deg p}.
        \]
    \end{enumerate}
    they proved the effective solubility for all parameters $t_1, \ldots, t_r$, which (for an effectively computable $t_0$ and $\tau$ in an explicit range) satisfy the conditions
    \[
        t_0 \leq \min_{k \in \set{1, \ldots, r}}\set{ t_k }, \qquad \max_{k \in \set{1, \ldots, d}}\set{ t_k } \leq \br{ \min_{k \in \set{1, \ldots, r}}\set{ t_k } }^\tau.
    \]

    Heuberger~\cite{heubConj} later improved the result by very technical but explicit conditions on terms involving the degrees of the polynomials. In the cubic case, his conditions are weaker than in Thomas' original result, thus improving his theorem.

    The "polynomial case" is solved to the degree described above. One way to extend these investigations is to consider classes of parameter functions other than polynomials. In~\cite{hilgart22}, an explicit cubic split family was considered, that was parametrised by the Fibonacci and Lucas sequences. The family was solved effectively and the (analogue of the) conjecture observed. Using a combination of different reduction methods, the bounds on the size of the parameters were reduced to such an extent that the remaining equations could be checked, thus solving the equation
    \[
        x(x-F_n\, y)(x-L_n\, y) - y^3 = \pm 1
    \]
    completely. Apart from the cases $n=1$ and $n=3$, only the trivial solutions were found.

    In [2], the ideas extended from the polynomial case and introduced to tackle the exponential case were used to solve the cubic case in general, in an analogue of Thomas' original work. The conditions on the linear recurrence sequences $A(n)$ and $B(n)$ (both with a dominant root) are very mild and only restrictive at all if the dominant roots have the same absolute value. This is possible because the growth of the solutions can be described by $2\times 2$ matrices and their determinants, allowing very few term cancellations. If we consider families of higher degree, there is much more potential for term cancellations, which in turn requires a restriction of the class of parameter functions.

    In the light of Heuberger and Tichy's result \cite{heubtichy99}, our main result is as follows.    
    \begin{thm}\label{thm:main}
        Let $(G_1(n))_{n \in \NN}, \ldots, (G_d(n))_{n \in \NN}$ be $d$ simple linearly recurrent integer sequences satisfying the following conditions.
        \begin{enumerate}
            \item $G_1, \ldots, G_d$ each satisfy a dominant root condition, with dominant roots $\gamma_1, \ldots, \gamma_{d}$ and $0 \leq \gamma_1 < \cdots < \gamma_{d-2} < \gamma_{d-1} = \gamma_d =\gamma$.
            \item The constant terms $g_{d-1}$ and $g_d$ corresponding to the dominant roots $\gamma_{d-1}$ and $\gamma_d$ in the closed formula for $G_{d-1}$ and $G_d$ satisfy $g_{d-1} = g_d = g$.
            \item \label{cond:seconddominantroots} $G_{d-1}$ and $G_d$ have second dominant roots $\delta_{d-1}$ and $\delta_d$ with corresponding constant terms $h_{d-1}$ and $h_d$ satisfying $\abs{\delta_{d-1} } < \abs{\delta_d} < \gamma_{d-2}$ and $\gamma_{d-2}^2 < \gamma \abs{\delta_d}$.
        \end{enumerate}
    
        Define for each $n \in \NN$ the homogeneous polynomial
        \[
            f_n(x, y) = (x-G_1(n)\,y) \cdots (x-G_d(n)\, y) - y^d,
        \]
        and let $x, y, n$ be integers satisfying $\abs{y} \geq 2$ and $f_n(x, y) = \pm 1$. Then there exists an effectively computable constant $\kappa$, depending on the coefficients of $G_1, \ldots, G_d$, such that
        \[
            \max\set{ \log\abs{x}, \log\abs{y}, n } \leq \kappa.
        \]
    \end{thm}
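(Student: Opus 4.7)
The overall strategy is to mimic the Heuberger--Tichy approach for polynomial split families, translating the degree condition $\deg p_{d-1}=\deg p_d$ into the equal-dominant-root condition $\gamma_{d-1}=\gamma_d=\gamma$ with matching leading coefficients $g_{d-1}=g_d=g$, and the condition $\mathrm{LH}(p_{d-1})=\mathrm{LH}(p_d)$ with $p_{d-1}\neq p_d$ into the condition that the two sequences differ only in their secondary expansions (governed by $\delta_{d-1},\delta_d$). Write each sequence through its Binet--type closed form and obtain asymptotics
\[
    G_i(n) = g_i\gamma_i^n\bigl(1+\landauO{(\gamma_i'/\gamma_i)^n}\bigr)\quad(i<d-1), \qquad G_d(n)-G_{d-1}(n) = (h_d-h_{d-1})\delta_d^n\bigl(1+\landausmallo{1}\bigr),
\]
for $n$ large, which gives effective lower and upper bounds on $\abs{G_i(n)-G_j(n)}$ for every pair $i\neq j$; in particular $\abs{G_d(n)-G_{d-1}(n)}\asymp\abs{\delta_d}^n$, while all other differences are of order $\gamma^n$ or $\gamma_{d-2}^n$.

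Given a non-trivial solution $f_n(x,y)=\pm 1$ with $\abs y\geq 2$, I would first show that exactly one factor $\beta_j := x-G_j(n)y$ is ``small'', i.e.\ at most $\landauO{1}$, while the remaining $\beta_i$ behave like $(G_j(n)-G_i(n))y$. This is the standard gap argument: after dividing $f_n(x,y)=\pm1$ by $y^d$ one sees that $\prod_i\abs{x/y - G_i(n)}$ is tiny, and the well-separated asymptotics force all but one factor to be comparable to the pairwise differences of the $G_i(n)$. One then splits the analysis into cases according to which index $j$ is the ``closest'' one.

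The core tool is Siegel's three-term identity, for any triple $i<j<k$,
\[
    (G_j(n)-G_k(n))\,\beta_i + (G_k(n)-G_i(n))\,\beta_j + (G_i(n)-G_j(n))\,\beta_k = 0,
\]
which rearranges into $\Lambda:=\bigl|(G_j(n)-G_k(n))\beta_i/[(G_k(n)-G_i(n))\beta_j]-1\bigr|$ being equal to a ratio that is upper-bounded by the smallness of $\beta_k$ divided by the comparatively large $\beta_j$. Substituting the asymptotics of $G_\ell(n)$ and the estimates for $\beta_\ell$, one gets an upper bound of the shape $\exp(-c\, n)$ on $\Lambda$, with $c>0$ depending on the choice $(i,j,k)$. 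Simultaneously, $\Lambda$ is a linear form in logarithms of algebraic numbers whose heights are polynomial in $n$ and whose coefficients can be taken of absolute value bounded by $\log\abs y+\landauO{n}$; applying a Matveev/Baker--W\"ustholz lower bound gives $\log\Lambda\geq -C\,\log(\log\abs y+n)\cdot (\text{height contributions})$. Comparing the two bounds yields $n$, and hence $\log\abs y$ and $\log\abs x$, below an effectively computable constant.

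The main obstacle, and the reason the hypothesis~(\ref{cond:seconddominantroots}) is stated as it is, lies in the case when the closest root is $j\in\{d-1,d\}$: then the triple $(i,d-1,d)$ must be chosen, and $\abs{G_d(n)-G_{d-1}(n)}$ no longer dominates the other differences but is only of size $\abs{\delta_d}^n$. The contribution of this factor to the upper bound on $\Lambda$ must be beaten by the gain coming from $\beta_{d-1}$ or $\beta_d$ being small; unravelling the inequalities leads exactly to the requirements $\abs{\delta_d}<\gamma_{d-2}$ (to ensure that the secondary term still distinguishes $G_{d-1}$ from $G_d$ more strongly than the deviation of $G_{d-2}$ from $\gamma^n$) and $\gamma_{d-2}^2<\gamma\abs{\delta_d}$ (to ensure that the exponential decay of $\Lambda$ is strong enough to overcome the loss from the height dependence in the linear-forms estimate). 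With the asymptotic estimates from Step~1 in place, the rest is a bookkeeping exercise in the style of~\cite{heubtichy99}, using the Baker-type bound to conclude.
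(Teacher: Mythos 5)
Your outline correctly captures one half of the paper's argument: the use of Siegel's identity to produce an $S$-unit equation, the application of a Baker--W\"ustholz lower bound for linear forms in logarithms, and the resulting comparison. But this step by itself only yields an upper bound for $\log\abs{y}$ in terms of $n$ of the shape $\log\abs{y} = \landauO{n\log n}$ (or more generally, polynomial in $n$), because the $\Lambda$ you set up is bounded above roughly by $\exp(-cn)/\abs{y}^2$ while Baker's theorem bounds it below by $\exp(-Cn^2\log\log\abs{y})$; comparing gives $\log\abs{y}$ bounded by a power of $n$, but does not bound $n$ itself. Your claim that ``comparing the two bounds yields $n$\ldots below an effectively computable constant'' is therefore a genuine gap: you still have infinitely many possible values of $n$, each with a finite but $n$-dependent set of candidate $(x,y)$.

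What is missing is the other, more technical, half of the paper's proof: a \emph{lower} bound for $\log\abs{y}$ that is exponential in $n$. The paper obtains this by representing the units $\beta^{(i)} = x-\alpha^{(i)}y$ in terms of the fundamental-ish units $\eta_k^{(i)} = \alpha^{(i)} - G_k(n)$ of the order $\ZZ[\alpha^{(1)}]$, solving the resulting linear system by Cramer's rule, and then performing a delicate asymptotic analysis of the determinants $u_k$, $v_k$ (using Gershgorin's circle theorem to control the smallest eigenvalue, plus several rounds of row/column operations that exploit the near-cancellation $\log\abs{\eta_i^{(d)}} - \log\abs{\eta_i^{(d-1)}} \sim (h_d/g)(\delta_d/\gamma)^n$). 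This is where the conditions $\abs{\delta_d}<\gamma_{d-2}$ and $\gamma_{d-2}^2<\gamma\abs{\delta_d}$ actually earn their keep --- not, as you suggest, in overcoming height loss in the linear-forms estimate, but in making the cancellations absorbable and the key determinant provably nonzero and of the right order. The output is $\log\abs{y} = \landauOmega{n^{-(d-4)}(\gamma/\gamma_{d-2})^n}$, and only the collision of this exponential lower bound with the Baker-derived almost-linear upper bound forces $n = \landauO{1}$. Without some version of this unit-representation argument, your proof does not close.
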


    If we consider the properties of solutions of the equation $f_n(x, y) = \pm 1$ where $\abs{y} \leq 1$, then the above theorem immediately implies
    \begin{cor}
        Let $G_1, \ldots, G_d$ satisfy the Conditions in Theorem~\ref{thm:main}. Then there exists an effectively computable $n_0$ such that for $n \geq n_0$ the parametrised Thue equation
        \[
            (x - G_1(n)\, y) \cdots (x - G_d(n)\, y) - y^d = \pm 1
        \]
        only has the solutions
        \[
            \set{ ( \pm 1, 0), \pm(G_1(n), 1), \ldots, \pm(G_d(n), 1) }.
        \]
        
    \end{cor}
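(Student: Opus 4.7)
The plan is to reduce the corollary directly to Theorem~\ref{thm:main}, which already handles the range $\abs{y} \geq 2$, and then to inspect by hand the finitely many possibilities arising from $\abs{y} \leq 1$. First I would take the effectively computable constant $\kappa$ from Theorem~\ref{thm:main} and tentatively set $n_0 := \lceil \kappa \rceil + 1$, enlarging it below by finitely many further effective thresholds. For $n \geq n_0$, any integer solution of $f_n(x, y) = \pm 1$ then satisfies $y \in \set{-1, 0, 1}$, so it suffices to treat these three values separately.

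For $y = 0$ the equation collapses to $x^d = \pm 1$, yielding the trivial solutions $(\pm 1, 0)$. For $y = 1$ it rewrites as $\prod_{i=1}^{d}(x - G_i(n)) \in \set{0, 2}$; the product vanishes exactly when $x = G_i(n)$ for some $i$, giving the trivial solutions $(G_i(n), 1)$. The case $y = -1$ I would handle by the symmetry $(x, y) \mapsto (-x, -y)$, using $f_n(-x, -y) = (-1)^d f_n(x, y)$, which sends solutions with $y = -1$ bijectively to solutions with $y = 1$ and produces the mirror trivial solutions $(-G_i(n), -1) = -(G_i(n), 1)$. It thus remains to rule out the subcase where the left-hand product equals $\pm 2$.

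Here I would exploit the arithmetic fact that a product of $d$ pairwise distinct integers equal to $\pm 2$ can only draw its factors from the four divisors $\set{\pm 1, \pm 2}$, which forces $d \leq 4$ and bounds the spread $\max_{i,j}\abs{G_i(n) - G_j(n)}$ by $4$. On the other hand, Conditions~(1) and~(\ref{cond:seconddominantroots}) together with the explicit closed-form expressions of the $G_i$ ensure that for $n$ large enough the $G_i(n)$ are pairwise distinct (the pair $G_{d-1}, G_d$ separating asymptotically as $-h_d \delta_d^n$) and that $\abs{G_d(n) - G_1(n)} \sim \abs{g}\gamma^n$ grows without bound. A second effective enlargement of $n_0$ therefore contradicts the spread bound and eliminates this subcase.

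The main technical point I foresee is making the distinctness and spread thresholds fully explicit so that $n_0$ remains effectively computable. This should be routine thanks to the strict inequalities $\gamma_1 < \cdots < \gamma_{d-2} < \gamma$ and $\abs{\delta_{d-1}} < \abs{\delta_d}$, which reduce everything to elementary tail estimates on exponential sums with strictly decreasing absolute values.
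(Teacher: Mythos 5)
Your proposal is correct and follows essentially the same route as the paper: invoke Theorem~\ref{thm:main} to reduce to $\abs{y} \leq 1$, read off the trivial solutions for $y = 0$ and $y = \pm 1$, and then rule out the residual subcase where $\prod_i (x - G_i(n)y) = \pm 2$. The paper disposes of that subcase a touch more directly—since $d \geq 4$ and the $G_i(n)$ are pairwise distinct for large $n$, already $d$ distinct integer factors cannot multiply to $\pm 2$—whereas you bound the spread of the factors by $4$ and contrast this against $\abs{G_d(n) - G_1(n)} \to \infty$; both observations hinge on the same effective distinctness of the $G_i(n)$ and are equivalent in substance.
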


    If we compare the conditions in Theorem 1 or those in the result of Heuberger and Tichy with Thomas' original conjecture, we notice that the condition of sufficiently different growth of the parameter functions (by strictly increasing degrees or dominant roots) is modified. Even the cubic case for linear recurrence sequences suggests that it should be easier in some sense if all the dominant roots are different. However, while Heuberger was able to be less restrictive and closer to the original conjecture in the polynomial case, a key part of his proof is that the variables controlling the growth of the parameter functions, the degrees of the polynomials, are integers. This is in contrast to the situation in Theorem 1, where the controlling terms are the dominant roots (or their logarithms). Forcing these to be integers, so that similar ideas to those in \cite{heubConj} could be applied, would be even more restrictive than the present conditions.

    For simplicity, we assume that all dominant roots are positive real numbers. This is not a restriction, we can apply the theorem to alternating sequences by considering the positive and negative sub-sequences separately.
    
    We use the standard $O$ notation to describe the asymptotic behaviour in terms of $n\to \infty$ and write $f(n) = \landauO{g(n)}$ if, for some positive constants $c$ and $n_0$, we have $\abs{f(n)} \leq c\, g(n) $ for all $n\geq n_0$, $f(n) = \landauOmega{g(n)}$ for the other inequality, and $f(n) = \landauTheta{g(n)}$ if both $f(n) = \landauO{g(n)}$ and $f(n) = \landauOmega{g(n)}$ holds. If $f(n)$ does not influence the asymptotic behaviour of $g(n)$ and we care not to quantify the specifics, we succinctly write $f(n) = \landausmallo{g(n)}$, e.g. we write $x + x^{-1} = x + \landausmallo{x}$ if we care not that the second addend is precisely $x^{-1}$. We use this so that some error terms do not become unnecessarily complicated without adding anything of conceptual or technical relevance.

    For example, using this notation, we immediately obtain the following asymptotic bounds from the requirements in Theorem~\ref{thm:main}.
    \begin{equation}\label{eq:errorterm}
        G_i(n) = \landauOmega{ \gamma_i^n }, \qquad \abs{G_i(n) - G_j(n)} = 
        \begin{cases}
            \landauOmega{\abs{\delta_d}^n} \quad \text{ if }\set{i, j} = \set{d-1, d} \\
            \landauOmega{ \max\set{\gamma_i, \gamma_j}^n } \quad \text{ else}
        \end{cases}
    \end{equation}

    Other more involved statements about the behaviour of  the recurrences and the roots of the polynomial $f_n$ will be discussed in a separate section before proceeding to the proof of Theorem~\ref{thm:main} in the following section.

%
%
\section{Auxiliary Results}
\label{sec:lemmas}

    Let $\alpha^{(1)} = \alpha^{(1)}(n), \dots \alpha^{(d)} = \alpha^{(d)}(n)$ be the roots of the polynomial $f_n$. Since $f_n(x) = f_n(x, 1) = (x-G_1(n))\cdots(x-G_d(n)) - 1$, we expect $\alpha^{(1)}$ to be close to $G_1(n)$, $\alpha^{(2)}$ to be close to $G_2(n)$, and so on. We quantify this in the next lemma, making sure that we also have an explicit term for the expressions $\alpha^{(i)} - G_i(n)$; the subsequent errors are of little interest and only relevant for the proof of the lemma.
    \begin{lem}\label{lem:rootapprox}
        The roots $\alpha^{(1)}, \ldots \alpha^{(d)}$ are all real and we have for $i=1, \ldots, d$ and
        \[
            \gamma_\epsilon = \gamma_\epsilon(i) = 
            \begin{cases}
                \gamma_i^{i-1} \cdot \prod_{k=i+1}^{d-2} \gamma_k \cdot \gamma^2 \quad \text{ if } i \not\in\set{d-1, d} \\
                \gamma^{d-2} \delta_d  \quad \text{ else}
            \end{cases}
        \]
        that
        \[
            \alpha^{(i)} = G_i(n) + \frac{ 1 + \landauO{\gamma_\epsilon^{-n}} }{ \prod_{ \substack{k = 1 \\ k\neq i} }^d (G_i(n) - G_k(n)) }.
        \]
    \end{lem}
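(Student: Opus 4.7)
The plan is to locate each root $\alpha^{(i)}$ of $f_n$ close to the corresponding integer $G_i(n)$ via an Intermediate Value argument, and then to substitute back into the defining equation once to extract the leading term of $\alpha^{(i)} - G_i(n)$.

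Set $P(x) = \prod_{k=1}^d(x - G_k(n))$ so that $f_n = P - 1$, and write $C_i = \prod_{k \neq i}(G_i(n) - G_k(n))$. From the pairwise growth estimates in~\eqref{eq:errorterm} one reads off $|C_i| = \landauTheta{\gamma_\epsilon(i)^n}$ with $\gamma_\epsilon$ as in the statement, so in particular $|C_i| \to \infty$. For every $i$ one has the identity
\[
P\!\left(G_i + \frac{t}{C_i}\right) = t \cdot \prod_{k \neq i}\!\left(1 + \frac{t}{C_i \, (G_i - G_k)}\right),
\]
and for bounded $t$ each correction inside the product tends to $0$, since $|C_i(G_i - G_k)| \geq |G_i - G_k|^2 \to \infty$. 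Specialising to $t = 2$ gives $P(G_i + 2/C_i) > 1$, hence $f_n > 0$ there, whereas $f_n(G_i) = -1$; the Intermediate Value Theorem therefore produces a real root $\alpha^{(i)}$ with $|\alpha^{(i)} - G_i| < 2/|C_i|$. These localisation intervals are pairwise disjoint for $n$ large, because their widths $2/|C_i|$ decay exponentially faster than the smallest gap $|G_i - G_j|$ --- this is where condition~\ref{cond:seconddominantroots} is needed for the delicate pair $\set{d-1,d}$. The $d$ distinct real roots so obtained must exhaust the zero set of the degree-$d$ polynomial $f_n$.

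For the refinement, set $\epsilon_i = \alpha^{(i)} - G_i(n)$, substitute into $P(\alpha^{(i)}) = 1$, and factor out $C_i$ to obtain
\[
\epsilon_i \, C_i \cdot \prod_{k \neq i}\!\left(1 + \frac{\epsilon_i}{G_i - G_k}\right) = 1.
\]
Using the initial bound $|\epsilon_i| < 2/|C_i|$ together with the gap estimates in~\eqref{eq:errorterm}, one bounds the deviation of the product on the left from $1$ by $\landauO{\gamma_\epsilon^{-n}}$: it is dominated by the single worst summand $\epsilon_i/(G_i - G_{k^\ast})$ with $k^\ast$ minimising $|G_i - G_k|$. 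Inverting then gives $\epsilon_i \, C_i = 1 + \landauO{\gamma_\epsilon^{-n}}$, which is the claimed formula.

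The main obstacle lies in this final estimate for the two problematic indices $i \in \set{d-1, d}$: the closest $G_k$ to $G_i$ now sits at distance only $\landauTheta{|\delta_d|^n}$ rather than a power of $\gamma$, so the naive bound for the worst summand picks up an extra factor of $|\delta_d|^{-n}$ that must be re-absorbed against $\gamma_\epsilon = \gamma^{d-2}\delta_d$ via the inequalities $|\delta_d| < \gamma_{d-2}$ and $\gamma_{d-2}^2 < \gamma |\delta_d|$ from condition~\ref{cond:seconddominantroots}. For the remaining indices the analogous estimate is routine because all the relevant gaps grow as powers of dominant roots $\gamma_k \geq 1$.
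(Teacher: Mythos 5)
Your argument is correct and reaches the stated formula, but the route is slightly different from the paper's. The paper plugs a candidate of the form $G_i + (1 + u\gamma_\epsilon^{-n})/C_i$ directly into $f_n$ for $u = \pm 1$, shows the two signs differ, and invokes the Intermediate Value Theorem once with full precision already built in. You instead do a two-stage argument: a coarse IVT with the candidate $G_i + 2/C_i$ to localise a real root within $2/|C_i|$ of $G_i$, and then a bootstrap by substituting $\alpha^{(i)} = G_i + \epsilon_i$ back into $P(\alpha^{(i)}) = 1$ to refine $\epsilon_i C_i = 1 + \landauO{\gamma_\epsilon^{-n}}$. The two-stage version is arguably cleaner because the coarse localisation makes the subsequent expansion of $\prod_{k\neq i}(1 + \epsilon_i/(G_i - G_k))$ legitimate without having to juggle a sign parameter; what it costs you is a second pass through the estimates.

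One remark on your final paragraph: the worry about an ``extra factor of $|\delta_d|^{-n}$'' for $i \in \set{d-1,d}$ is a red herring, and trying to re-absorb it via Condition~\ref{cond:seconddominantroots} does not quite work (nothing in that condition forces $|\delta_d| \geq 1$). The clean way to see that the worst summand is harmless is elementary: since the $G_k(n)$ are distinct integers, $|G_i(n) - G_{k^\ast}(n)| \geq 1$, so $|\epsilon_i / (G_i - G_{k^\ast})| \leq |\epsilon_i| \leq 2/|C_i| = \landauO{\gamma_\epsilon^{-n}}$ already, and the same bound holds for every summand and hence for their sum (up to the bounded multiplicity $d-1$). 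The asymptotic lower bound $|G_i - G_{k^\ast}| = \landauOmega{|\delta_d|^n}$ can only make this better, never worse. The same trivial-integrality observation also disposes of the disjointness of the localisation intervals without appealing to Condition~\ref{cond:seconddominantroots}: the centres $G_i(n)$ are distinct integers at pairwise distance at least $1$, while the radii $2/|C_i| \to 0$. Apart from this, the proof is sound.
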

    \begin{proof}
        For $u = \pm 1$ we plug our approximation for the root $\alpha^{(i)}$ into $f_n$, replacing the $O$-term by $u\, \gamma_\epsilon^{-n}$. The sign of $u$ determines the sign of the expression, the statement then follows from the intermediate value theorem.

        Let $i \in \set{1, \ldots, d}$, then we consider the expression $f(\xi^{(i)}) + 1$ for
        \[
            \xi^{(i)} = G_i(n) + \frac{ 1 + u\, \gamma_\epsilon^{-n} }{ \prod_{ \substack{k = 1 \\ k\neq i} }^d (G_i(n) - G_k(n)) }.
        \]

        The form of our function is $f(\xi^{(i)})+1 = (\xi^{(i)} - G_1(n)) \cdots (\xi^{(i)} - G_d(n))$ and we split the product into the factor $(\xi^{(i)} - G_i(n))$, which is
        \[
            \frac{ 1 + u\gamma_\epsilon^{-n} }{ \prod_{ \substack{k = 1 \\ k\neq i} }^d (G_i(n) - G_k(n))  },
        \]
        and everything else. Note that by the definition of $\gamma_\epsilon(i)$ and Equation~\eqref{eq:errorterm} we have
        \[
            \prod_{ \substack{k = 1 \\ k\neq i} }^d (G_i(n) - G_k(n)) = \landauOmega{ \gamma_\epsilon^n }.
        \]
        
        The remaining product is
        \[
            \prod_{ \substack{k = 1 \\ k\neq i} }^d (\xi^{(i)} - G_k(n)) = \prod_{ \substack{k = 1 \\ k\neq i} }^d \left( G_i(n) - G_k(n) + \frac{ 1 + u\, \gamma_\epsilon^{-n} }{ \prod_{ \substack{k = 1 \\ k\neq i} }^d (G_i(n) - G_k(n)) } \right),
        \]
        and we see each factor as made up of two addends. The first is $(G_i(n) - G_k(n))$, while the second is a $\landauO{\gamma_\epsilon^{-n}}$. If we multiply the product, the highest order term is the product of all the $d-1$ many $(G_i(n) - G_k(n))$, followed by the sums of those where we have $(G_i(n)-G_k(n))$ $d-2$ times and $\landauO{\gamma_\epsilon^{-n}}$ once, and so on.
        
        We explicitly write the two highest order terms (in the sense described above), the rest we can hide in an error term $\landausmallo{\gamma_\epsilon^{-n}}$, i.e.
        \begin{align*}
             \prod_{ \substack{k = 1 \\ k\neq i} }^d ( G_i(n) - G_k(n) ) &+ \sum_{ \substack{l = 2 \\ l\neq i} }^d \frac{ \prod_{ \substack{k = 1 \\ k\neq i, l} }^d (G_i(n) - G_k(n)) }{ \prod_{ \substack{k = 1 \\ k\neq i} }^d (G_i(n)-G_k(n)) } \left( 1 + u\, \gamma_\epsilon^{-n} \right)             +\landausmallo{\gamma_\epsilon^{-n}}  \\
             = \prod_{ \substack{k = 1 \\ k\neq i} }^d ( G_i(n) - G_k(n) ) &+ \sum_{ \substack{k,l = 1 \\ i \not \in \set{k,l}, k\neq l} }^d \frac{ 1 + u \gamma_\epsilon^{-n} }{ (G_i(n)-G_k(n)) ( G_i(n) - G_l(n) ) } + \landausmallo{\gamma_\epsilon^{-n}}.
        \end{align*}
        
        We can also move the terms with $u \gamma_\epsilon^{-n}$ into the error term $\landausmallo{\gamma_\epsilon^{-n}}$, i.e.
        \begin{equation}\label{eq:rootapprox_prod}
            \prod_{ \substack{k = 1 \\ k\neq i} }^d ( G_i(n) - G_k(n) ) + \sum_{ \substack{k,l = 1 \\ i \not \in \set{k,l}, k\neq l} }^d \frac{1}{ (G_i(n)-G_k(n)) ( G_i(n) - G_l(n) ) } + \landausmallo{\gamma_\epsilon^{-n}}
        \end{equation}

        We multiply with the factor $(\xi^{(i)}-G_i(n))$, the first product is then $1+u \gamma_\epsilon^{-n}$ while the product with the sum is again a $\landausmallo{\gamma_\epsilon^{-n}}$ with the same argument. Put together,
        \[
            f(\xi^{(i)}) + 1 = 1 + u \gamma_\epsilon^{-n} + \landausmallo{ \gamma_\epsilon^{-n} }.
        \]
        applies. Setting $u=1$ and $u=-1$, we get $f(\xi^{(i)}) > 0$ and $f(\xi^{(i)}) < 0$ if $n$ is sufficiently large such that the error term can no longer compensate for $\pm \gamma_\epsilon^{-n}$. From this the statement follows by the intermediate value theorem.
    \end{proof}

    If we combine this with Equation~\eqref{eq:errorterm}, we immediately obtain
    \begin{cor}\label{cor:rootbounds}
        Let $\alpha^{(i)}$ and $\alpha^{(j)}$ be two roots of $f_n$. Then we have
        \[
            \abs{\alpha^{(i)}} = \landauOmega{\gamma_i^n}, \quad \abs{ \alpha^{(i)} - \alpha^{(j)} } = 
            \begin{cases}
                \landauOmega{ \abs{\delta_d}^n } \quad \text{ if } \set{i, j} = \set{d-1, d} \\
                \landauOmega{ \max\set{\gamma_i, \gamma_j}^n } \quad \text{ else},
            \end{cases}
        \]
        and the same result holds true if we replace $\alpha^{(j)}$ with $G_j(n)$.
    \end{cor}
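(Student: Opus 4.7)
The plan is to reduce everything to Lemma~\ref{lem:rootapprox} combined with Equation~\eqref{eq:errorterm}, so that the proof is essentially an application of the (reverse) triangle inequality. By Lemma~\ref{lem:rootapprox}, each root can be written as
\[
    \alpha^{(i)} = G_i(n) + E_i, \qquad E_i = \landauO{\gamma_\epsilon(i)^{-n}},
\]
and the first step is simply to note that in every case the quantity $\gamma_\epsilon(i)$ exceeds $1$ (for indices $i \le d-2$ because $\gamma_\epsilon(i)$ contains the factor $\gamma^2$, and for $i \in \{d-1,d\}$ because Condition~\ref{cond:seconddominantroots} gives $\gamma \abs{\delta_d} > \gamma_{d-2}^2 \ge 1$, hence $\gamma^{d-2}\abs{\delta_d} > 1$). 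Thus each $E_i$ decays exponentially.

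For the bound on $\abs{\alpha^{(i)}}$, I would apply the reverse triangle inequality: $\abs{\alpha^{(i)}} \ge \abs{G_i(n)} - \abs{E_i}$. Since $\abs{G_i(n)} = \landauOmega{\gamma_i^n}$ by Equation~\eqref{eq:errorterm} and $\abs{E_i}$ is exponentially small, the first term dominates for large $n$, which gives the desired $\landauOmega{\gamma_i^n}$ lower bound. For the second bound I would decompose
\[
    \alpha^{(i)} - \alpha^{(j)} = \br{G_i(n) - G_j(n)} + E_i - E_j,
\]
and invoke Equation~\eqref{eq:errorterm} for the main term, which yields the claimed $\landauOmega{\abs{\delta_d}^n}$ (respectively $\landauOmega{\max\set{\gamma_i,\gamma_j}^n}$) lower bound; again the $E_i, E_j$ error terms are negligible compared to this, by the step above. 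Replacing $\alpha^{(j)}$ by $G_j(n)$ only removes the single error term $E_j$ from the decomposition, so the same bound holds verbatim.

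The only point that requires genuine care is verifying the asserted dominance of the main terms over the errors in the pair $\set{i,j} = \set{d-1, d}$, where the main lower bound $\abs{\delta_d}^n$ is itself smaller than the other cases; here one needs that $\gamma_\epsilon(d-1)^{-n} = \br{\gamma^{d-2}\abs{\delta_d}}^{-n}$ is dominated by $\abs{\delta_d}^n$, i.e. that $\gamma^{d-2}\abs{\delta_d}^2 > 1$, which follows from Condition~\ref{cond:seconddominantroots} since $\gamma \abs{\delta_d} > \gamma_{d-2}^2$ and $\gamma > \gamma_{d-2}$. Aside from this bookkeeping, the corollary is an entirely routine consequence of the preceding lemma.
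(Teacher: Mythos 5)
Your proposal is correct and is exactly what the paper intends: the corollary is stated as following ``immediately'' from Lemma~\ref{lem:rootapprox} together with Equation~\eqref{eq:errorterm}, which is precisely the triangle-inequality decomposition $\alpha^{(i)}-\alpha^{(j)}=(G_i(n)-G_j(n))+(E_i-E_j)$ that you spell out, with the only non-automatic step being the dominance check for $\set{i,j}=\set{d-1,d}$. That check does hold, but note that your derivation of $\gamma^{d-2}\abs{\delta_d}^2>1$ from Condition~\eqref{cond:seconddominantroots} tacitly also uses $d\geq 4$ and $\gamma_{d-2}\geq 1$ (both standing assumptions for non-degenerate integer recurrences of the required degree), not merely $\gamma\abs{\delta_d}>\gamma_{d-2}^2$ and $\gamma>\gamma_{d-2}$.
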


    Next, we examine the number field $\KK_n = \QQ\br{\alpha^{(1)}}$ generated by $f_n$. To do this, we define
    \[
        \eta_j^{(i)} = \alpha^{(i)} - G_j(n) \quad \text{ for } j = 1, \ldots, d,
    \]
    for each $i = 1, \ldots, d$. This definition immediately gives us
    \begin{equation}\label{eq:etamultiplyto1}
        \eta_1^{(i)} \cdots \eta_d^{(i)} = f(\alpha^{(i)}) + 1 = 1
    \end{equation}
    for each $i = 1, \ldots d$. If $i=j$, then by Lemma~\ref{lem:rootapprox} and Equation~\eqref{eq:errorterm} the asymptotic bound  $\abs{\eta_i^{(i)}} = \landauO{ \br{\gamma_2 \cdots \gamma_{d-2} \gamma^2}^{-n} }$ holds for any $i$, since the $\gamma_\epsilon$ defined in the lemma would only swap some of the factors $\gamma_2\cdots \gamma_{d-2}\gamma^2$ for larger ones. If instead $i\neq j$ we have either $\abs{\eta_j^{(i)}} = \landauOmega{\delta_d^n}$ or $\abs{\eta_j^{(i)}} = \landauOmega{ \max\set{\gamma_i, \gamma_j}^n }$ by Corollary~\ref{cor:rootbounds}, which we collect in the following equation,
    \begin{align}\label{eq:etabounds}
        \abs{ \eta_j^{(i)} } =
        \begin{cases}
            \landauO{ \br{\gamma_2 \cdots \gamma_{d-2} \gamma^2}^{-n} } \quad &\text{ if } i=j \\
            \landauOmega{ \delta_d^n } \quad &\text{ if } \set{i, j} = \set{d-1, d} \\
            \landauOmega{ \max\set{\gamma_i, \gamma_j}^n } \quad &\text{ else}.
        \end{cases}
    \end{align}

    We want to make statements about matrices containing logarithms of these $\eta_j^{(i)}$, and to do this we use the following theorem of Gershgorin \cite{gersh31}, sometimes referred to as \textit{Gershgorin's Circle Theorem},
    \begin{thm}\label{thm:circletheorem}
        Let $A = \begin{pmatrix} a_{i j} \end{pmatrix}$ be a $n\times n$ matrix with complex entries and define for each row $i = 1, \ldots, n$ the radius
        \[
            R_i = \sum_{ \substack{j = 1 \\ j\neq i } }^n \abs{ a_{i j} }.
        \]

        Then every eigenvalue $\lambda$ of A lies in at least one of the disks
        \[
            \set{ z \, : \, \abs{z - a_{i i}} \leq R_i } \qquad i=1\ldots, n.
        \]
    \end{thm}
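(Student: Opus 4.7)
The plan is to argue directly from the eigenvalue equation $Av = \lambda v$, choosing a clever index based on the coordinate of largest modulus of the eigenvector. This is the standard "largest coordinate" trick for estimating spectra.

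First I would take an arbitrary eigenvalue $\lambda$ of $A$ with a (necessarily nonzero) eigenvector $v = (v_1, \ldots, v_n)^T$, and let $i$ be an index achieving $\abs{v_i} = \max_{j} \abs{v_j}$; this maximum is strictly positive because $v \neq 0$. Rescaling $v$ if convenient, I may even assume $\abs{v_i} = 1$, but this is cosmetic. Reading off the $i$-th row of the identity $Av = \lambda v$ gives
\[
    \sum_{j=1}^{n} a_{ij}\, v_j = \lambda\, v_i,
\]
and isolating the diagonal contribution yields
\[
    (\lambda - a_{ii})\, v_i = \sum_{\substack{j=1 \\ j\neq i}}^{n} a_{ij}\, v_j.
\]

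From here I would apply the triangle inequality to the right-hand side and then use $\abs{v_j} \leq \abs{v_i}$ to bound
\[
    \abs{\lambda - a_{ii}}\, \abs{v_i} \leq \sum_{\substack{j=1 \\ j\neq i}}^{n} \abs{a_{ij}}\, \abs{v_j} \leq \abs{v_i} \sum_{\substack{j=1 \\ j\neq i}}^{n} \abs{a_{ij}} = \abs{v_i}\, R_i.
\]
Dividing by the positive quantity $\abs{v_i}$ gives $\abs{\lambda - a_{ii}} \leq R_i$, which places $\lambda$ in the $i$-th Gershgorin disk and completes the argument.

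There is no real obstacle here; the only subtlety is remembering that one must pick the index $i$ \emph{after} the eigenvector is chosen, so that the bound $\abs{v_j}/\abs{v_i} \leq 1$ is available in every off-diagonal term. The index $i$ obtained this way depends on $\lambda$, which matches the phrasing of the theorem: each eigenvalue lies in \emph{at least one} disk, not in every one.
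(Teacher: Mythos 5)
Your proof is correct and is the standard ``largest coordinate'' argument for Gershgorin's circle theorem. Note, however, that the paper does not actually prove this statement: Theorem~\ref{thm:circletheorem} is quoted as a known classical result with a citation to Gershgorin's 1931 paper, so there is no in-paper proof to compare against. For what it is worth, your argument is exactly the textbook proof one would expect, and it is complete; the remark about choosing the index $i$ after fixing the eigenvector (so that the bound $\abs{v_j}\le\abs{v_i}$ is available) correctly identifies the one place where the argument could otherwise go astray.
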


    \begin{lem}\label{lem:etamatrixdet}
        Let $k \in \set{1, \ldots, d-1}$ and let
        \[
            B_k =
            \begin{pmatrix}
                \log\abs{ \eta_1^{(1)} } & \log\abs{ \eta_2^{(1)} } & \cdots & \log\abs{ \eta_k^{(1)} } \\
                \vdots & \vdots & \ddots & \vdots \\
                \log\abs{ \eta_1^{(k)} } & \log\abs{ \eta_2^{(k)} } & \cdots & \log\abs{ \eta_k^{(k)} }
            \end{pmatrix},
        \]
        then we have $\det B_k = \landauTheta{n^k}$.
    \end{lem}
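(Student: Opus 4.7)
My plan is to turn the determinant estimate into a statement about a fixed matrix $M_k$ of leading coefficients and then apply Gershgorin's Circle Theorem. From Lemma~\ref{lem:rootapprox} and~\eqref{eq:etabounds}, each entry of $B_k$ has an asymptotic expansion
\[
\log\abs{\eta_j^{(i)}} = n\,c_{ij} + \landauO{1},
\]
with $c_{ii} = -\log\gamma_\epsilon(i)$ and $c_{ij} = \log\max\set{\gamma_i,\gamma_j}$ for $i\neq j$; the exceptional case $\set{i,j}=\set{d-1,d}$ from~\eqref{eq:etabounds} cannot arise since $i,j\in\set{1,\ldots,k}$ with $k\leq d-1$. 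Writing $B_k = n\,M_k + E_k$ with $M_k = (c_{ij})_{i,j=1}^k$ and $E_k = \landauO{1}$ and using multilinearity of the determinant gives
\[
\det B_k = n^k \det M_k + \landauO{n^{k-1}},
\]
so the problem reduces to a positive lower bound on $\abs{\det M_k}$.

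To produce such a bound I would apply Theorem~\ref{thm:circletheorem} to $M_k$. For every row $i\in\set{1,\ldots,d-2}$, the diagonal entry is $-e_i$ with modulus
\[
e_i = (i-1)\log\gamma_i + \sum_{\ell=i+1}^{d-2}\log\gamma_\ell + 2\log\gamma,
\]
while the off-diagonal row-sum is at most $(i-1)\log\gamma_i + \sum_{\ell=i+1}^{d-2}\log\gamma_\ell + \log\gamma = e_i - \log\gamma$ (the added $\log\gamma$ appears only when $k=d-1$ through the column $j=d-1$). Thus $e_i$ exceeds the row-sum by at least $\log\gamma > 0$ uniformly for $k\leq d-1$, so each of these Gershgorin discs lies in the half-plane $\set{z:\operatorname{Re} z\leq -\log\gamma}$ and is bounded away from the origin. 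This settles the case $k\leq d-2$ at once.

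The delicate case, and the main technical obstacle, is the last row when $k = d-1$: its Gershgorin disc is centred at $-((d-2)\log\gamma + \log\abs{\delta_d})$ with radius $(d-2)\log\gamma$, and plain diagonal dominance may fail when $\abs{\delta_d}$ is close to $1$. To handle it I would perform the row operations $R_i \leftarrow R_i - R_{i-1}$ for $i=2,\ldots,d-1$ on $M_{d-1}$; the resulting matrix has zeros above the subdiagonal except in its first row, and a cofactor expansion along the last column then produces an explicit factorisation of $\det M_{d-1}$ involving the $e_i$'s, the differences $\log\gamma_i - \log\gamma_{i-1}$, and the quantity $\log(\gamma\abs{\delta_d})$. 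The strict inequalities $\abs{\delta_d}<\gamma_{d-2}<\gamma$ and $\gamma_{d-2}^2<\gamma\abs{\delta_d}$ from condition~(3) force each factor in this product to be bounded away from zero, completing the lower bound and hence the proof of $\det B_k = \landauTheta{n^k}$.
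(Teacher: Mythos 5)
Your reduction to the leading-coefficient matrix $M_k$ is a genuinely different route than the paper's. The paper applies Gershgorin's theorem directly to $B_k$, bounds its smallest eigenvalue from below, and uses the relation $\prod_j \eta_j^{(i)} = 1$ to rewrite the Gershgorin slack of each row as $\sum_{j > k}\log\abs{\eta_j^{(i)}}$. You instead write $B_k = n\,M_k + \landauO{1}$ and, by multilinearity, reduce the lemma to the claim $\det M_k \neq 0$, after which a Gershgorin argument on the constant matrix $M_k$ should finish. This reduction is valid, the identification of the leading coefficients $c_{ij}$ is correct, and the diagonal-dominance computation for the rows $i \leq d-2$ works.

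The gap is the last row in the case $k = d-1$. You correctly identify that the Gershgorin slack for that row is exactly $\log\abs{\delta_d}$, which need not be positive a priori. But the proposed fix does not land where you claim: after the row operations $R_i \leftarrow R_i - R_{i-1}$ the matrix is not triangular --- its first row is untouched and still fully populated --- so a cofactor expansion along the last column leaves a dense $(d-2)\times(d-2)$ Hessenberg-type minor, not a product of scalar factors. The asserted ``explicit factorisation'' involving the $e_i$'s, $\log\gamma_i - \log\gamma_{i-1}$ and $\log(\gamma\abs{\delta_d})$ is never actually derived, and it is not visible how condition~(3) would enter such a factorisation.

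Note also that the paper's own proof bounds the eigenvalue in this row by $\log\abs{\eta_d^{(d-1)}} = n\log\abs{\delta_d} + \landauO{1}$, so it tacitly requires $\abs{\delta_d} > 1$ for the bound to be $\landauOmega{n}$. Under that same implicit hypothesis your plain Gershgorin argument already succeeds on every row of $M_{d-1}$ (last-row slack $\log\abs{\delta_d} > 0$), and the row-reduction detour is unnecessary. A uniform way to exhibit the slack without spelling out $e_i$ is to mirror the paper's trick at the level of leading coefficients: since $\sum_{j=1}^d c_{ij} = 0$ (from $\prod_j\eta_j^{(i)}=1$) and $c_{ij} > 0$ for $j\neq i$, the slack for row $i$ of $M_k$ equals $\sum_{j>k} c_{ij}$, which is $\log\gamma$ for $i\leq d-2$ and $\log\abs{\delta_d}$ for $i=d-1$.
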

    \begin{proof}
        It follows immediately from Lemma~\ref{lem:rootapprox} that $\log\abs{\eta_j^{(i)}} = \landauTheta{n}$, which implies $\det B_k = \landauO{ n^k }$. We need to prove the other direction, that $\det B_k = \landauOmega{ n^k }$.

        Let $\lambda$ be the smallest eigenvalue of $B_k$. According to Theorem~\ref{thm:circletheorem}, for at least one $i \in \set{1, \ldots, k}$ it must be that
        \[
            \sum_{ \substack{j = 1 \\ j\neq i} }^k \abs{ \log\abs{ \eta_j^{(i)} } } \geq \abs{ \lambda - \log\abs{ \eta_i^{(i)} } }.
        \]

        From this inequality it follows that
        \[
            \abs{\lambda} \geq \abs{ \log\abs{\eta_i^{(i)}} } - \sum_{ \substack{j = 1 \\ j\neq i} }^k \abs{ \log\abs{ \eta_j^{(i)} } },
        \]
        and we can ignore the outer absolute value in the sum, since for each $j\neq i$ we have $\log\abs{\eta_j^{(i)}} > 0$. If we replace $\log\abs{\eta_i^{(i)}}$ by $-\sum_{ \substack{j = 1 \\ j\neq i} }^d \log\abs{ \eta_j^{(i)} }$ using Equation\eqref{eq:etamultiplyto1}, this implies
        \[
            \abs{\lambda} \geq \sum_{ j=k+1 }^d \log\abs{ \eta_j^{(i)} }.
        \]
        We can bound the sum from below by $\log\abs{\eta_d^{(i)}}$, which is a $\landauOmega{n}$ by Corollary~\ref{cor:rootbounds}. And if the smallest eigenvalue $\lambda$ is a $\landauOmega{n}$, then the determinant $\det B_k$ must be a $\landauOmega{n^k}$, proving the lemma.
    \end{proof}

    We next examine the order $\mathcal{O} = \ZZ[\alpha^{(1)}]$ of $\KK_n$ and its Regulator $R_{\mathcal{O}}$. We use the following estimate by Pohst \cite{pohst77}, whose proof, as noted by Heuberger \cite{heub99}, also works verbatim for non-maximal orders.

    \begin{thm}
        Let $\KK$ be a totally real algebraic number field of degree at least $4$, $\mathfrak{D}$ an order of $\KK$ with discriminant $d_{\mathfrak{D}}$. Let $R_{\mathfrak{D}}$ be the regulator of $\mathfrak{D}$. Then there exists an explicit constant $c$, depending only on the degree of $\KK$, such that
        \[
            R_{\mathfrak{D}} \geq c \log\br{ d_{\mathfrak{D}} }.
        \]
    \end{thm}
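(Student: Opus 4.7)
The plan is to bound $R_\mathfrak{D}$ from below by exploiting upper bounds for the discriminant in terms of the height of a unit that generates $\KK$ over $\QQ$. The underlying intuition is that a small regulator forces the existence of units whose conjugates are all close to $1$, and such units yield algebraic integers of small height, which in turn restricts how large the discriminant can be.

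First, I would fix a system of fundamental units $\epsilon_1, \ldots, \epsilon_{n-1}$ of $\mathfrak{D}$ (where $n = [\KK:\QQ]$) that is reduced in the sense of Minkowski, so that each $\norm{ l(\epsilon_i) }_\infty$ is comparable to the $i$-th successive minimum of the logarithmic unit lattice $\Lambda \subset \set{ x \in \mathbb{R}^n \,:\, \sum_i x_i = 0 }$, where $l(\epsilon) = \br{\log\abs{\sigma_1(\epsilon)}, \ldots, \log\abs{\sigma_n(\epsilon)}}$ and the $\sigma_i$ are the $n$ real embeddings of $\KK$. Up to a factor of $\sqrt{n}$, $R_\mathfrak{D}$ equals the covolume of $\Lambda$, so Minkowski's second theorem on successive minima yields $R_\mathfrak{D} \geq c_1(n) \cdot \max_i \norm{ l(\epsilon_i) }_\infty$ for some explicit constant $c_1(n)$.

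Second, I would show that a bounded product $\eta = \prod_i \epsilon_i^{a_i}$ with integer exponents $\abs{a_i} \leq C(n)$ generates $\KK$ over $\QQ$. This is possible because $\mathfrak{D}^\times$ has rank $n-1$ and, for $n\geq 4$, cannot be absorbed into any proper subfield. For such $\eta$ we have $\ZZ[\eta] \subseteq \mathfrak{D}$, hence $\abs{d_\mathfrak{D}} \leq \abs{d(\eta)}$, while a classical inequality of Remak bounds the polynomial discriminant via $\log\abs{d(\eta)} \leq c_2(n) + 2(n-1)\log M(\eta)$ in terms of the Mahler measure of $\eta$. Since $\log M(\eta) \leq c_3(n) \cdot \max_i \norm{l(\epsilon_i)}_\infty$ by construction, chaining these estimates yields $\log d_\mathfrak{D} \leq c_4(n) \cdot R_\mathfrak{D}$, as required.

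The main obstacle is the combinatorial construction of the field generator $\eta$ with the claimed height bound, uniformly in $\mathfrak{D}$. If some fundamental units happen to lie in proper subfields, one must argue that a short product still generates $\KK$; the hypothesis $n \geq 4$ rules out the degenerate cases where the lattice of subfields is too rich to avoid by combinatorial means. The extension to non-maximal orders, noted by Heuberger, requires no new ideas: Dirichlet's unit theorem, the logarithmic embedding, Minkowski's second theorem, and Remak's inequality all apply to any order in a totally real number field, and the only potentially delicate step, the inclusion $\ZZ[\eta] \subseteq \mathfrak{D}$, is automatic once $\eta \in \mathfrak{D}$.
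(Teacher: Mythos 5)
This statement is quoted from Pohst~\cite{pohst77}; the paper does not prove it, it only cites the result and records Heuberger's observation that the proof carries over to non-maximal orders. So there is no in-paper proof to compare against, and I can only assess your argument on its own merits. Your general strategy --- relate $R_\mathfrak{D}$ to the geometry of the logarithmic unit lattice, produce a unit $\eta$ of controlled height that generates $\KK$, and then bound $\log\abs{d_\mathfrak{D}}$ via $\abs{d_\mathfrak{D}} \leq \abs{d(\eta)}$ together with a Mahler--Remak discriminant estimate --- is indeed the classical Remak--Pohst circle of ideas, so the overall architecture is sound.

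However, there are two genuine gaps. First, Minkowski's second theorem does \emph{not} by itself give $R_\mathfrak{D} \geq c_1(n) \max_i \norm{l(\epsilon_i)}_\infty$. It bounds the covolume from below by the \emph{product} of the successive minima, so $R_\mathfrak{D} \geq c(n)\,\lambda_1 \cdots \lambda_{n-1}$; to conclude $R_\mathfrak{D} \geq c'\,\lambda_{n-1}$ you need an absolute lower bound on $\lambda_1, \ldots, \lambda_{n-2}$, i.e.\ a Lehmer-type lower bound for the height of a non-torsion unit. For a general number field such a bound is unknown, but for a totally real field it is a theorem of Schinzel that $h(\epsilon) \geq \tfrac{1}{2}\log\frac{1+\sqrt 5}{2}$ for any totally real unit $\epsilon$ not a root of unity. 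That is precisely where total reality enters the proof, and without it the first step collapses; you never invoke it. Second, the construction of a short generating unit $\eta = \prod_i \epsilon_i^{a_i}$ with $\abs{a_i} \leq C(n)$ is exactly the technical heart of the argument, and you explicitly defer it ("the main obstacle"). Noting that $n\geq 4$ "rules out the degenerate cases" is not an argument: a totally real quartic field can have three quadratic subfields, each of unit rank one, and one must actually show that no proper subfield can swallow a full-rank sublattice, and that a \emph{bounded} combination escapes all of them simultaneously. Until both of these points are supplied, the chain $\log\abs{d_\mathfrak{D}} \leq c_4(n)\,R_\mathfrak{D}$ is not established.
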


    For our order $\mathcal{O}$, the previous result and Corollary~\ref{cor:rootbounds} immediately gives
    \begin{cor}\label{cor:regulatorbound}
        We have that
        \[
            R_{\mathcal{O}} = \landauOmega{n}.
        \]
    \end{cor}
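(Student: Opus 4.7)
The plan is to invoke the theorem of Pohst stated just above and reduce the claim to a lower bound on $\log d_{\mathcal{O}}$. The hypotheses are satisfied: $\KK_n = \QQ\br{\alpha^{(1)}}$ is totally real by Lemma~\ref{lem:rootapprox}, and of degree $d$ (we are implicitly in the regime $d \geq 4$, the cubic analogue being handled elsewhere, cf.~\cite{hilgart22}). Pohst then yields $R_{\mathcal{O}} \geq c \log d_{\mathcal{O}}$ with an explicit constant $c = c(d) > 0$.

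Since $\alpha^{(1)}$ is a root of the monic integer polynomial $f_n$ and $\mathcal{O} = \ZZ[\alpha^{(1)}]$, the order discriminant equals the polynomial discriminant,
\[
    d_{\mathcal{O}} = \prod_{1 \leq i < j \leq d} \br{\alpha^{(i)} - \alpha^{(j)}}^2.
\]
Taking logarithms and applying Corollary~\ref{cor:rootbounds} to each factor,
\[
    \log d_{\mathcal{O}} \geq 2n \sum_{1 \leq i < j \leq d} \log \rho_{ij} + \landauO{1},
\]
where $\rho_{ij} = \max\set{\gamma_i, \gamma_j}$ unless $\set{i,j} = \set{d-1, d}$, in which case $\rho_{ij} = \abs{\delta_d}$.

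The $2(d-2)$ pairs of the form $(i, d-1)$ and $(i, d)$ with $i \leq d-2$ each contribute $\log \gamma$, which (under the mild assumption $\gamma > 1$, the only non-trivial parametric regime) dominates any possibly negative contributions from pairs with $\rho_{ij} < 1$, given the ordering $0 \leq \gamma_1 < \cdots < \gamma_{d-2} < \gamma$ and the inequalities in condition~(\ref{cond:seconddominantroots}) of Theorem~\ref{thm:main}. Hence the sum is strictly positive, $\log d_{\mathcal{O}} = \landauOmega{n}$, and combining with Pohst's inequality gives the claim.

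The only non-automatic step is the positivity of the logarithmic sum; this is a direct but non-empty check using the hypotheses on the dominant roots. Everything else is formal.
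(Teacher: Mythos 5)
Your proposal follows the same route as the paper: apply Pohst's regulator bound $R_{\mathcal{O}} \geq c\log d_{\mathcal{O}}$ and then estimate $\log d_{\mathcal{O}} = 2\sum_{i<j}\log\abs{\alpha^{(i)}-\alpha^{(j)}} = \landauOmega{n}$ via Corollary~\ref{cor:rootbounds}; the paper leaves this as "immediate," while you spell out the discriminant factorisation and the positivity of $\sum_{i<j}\log\rho_{ij}$, correctly noting that condition~(\ref{cond:seconddominantroots}) (giving $\abs{\delta_d} > \gamma_{d-2}^2/\gamma$) is what guards against the single possibly-negative term $\log\abs{\delta_d}$. This is correct and matches the intended argument.
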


    If we form the subgroup of $\mathcal{O}^\times$ generated by $-1$ and $\eta_1^{(i)}, \ldots, \eta_{d-1}^{(i)}$, we get the following estimates in
    \begin{lem}\label{lem:regNindex}
        Consider the subgroup $G = \langle -1, \eta_1^{(i)}, \ldots, \eta_{d-1}^{(i)} \rangle$ of $\mathcal{O}^\times = \ZZ[\alpha^{(1)}]^\times$ with Regulator $R_G$ and Index $I$. Then we have
        \[
            R_G = \landauTheta{n^{d-1}}, \qquad I = \landauO{n^{d-2}}.
        \]
    \end{lem}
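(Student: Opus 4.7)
The plan is to identify $R_G$ with the determinant computed in Lemma~\ref{lem:etamatrixdet}, and then to extract $I$ from the standard regulator-index relation $R_G = I \cdot R_\mathcal{O}$, using Corollary~\ref{cor:regulatorbound} for the lower bound on $R_\mathcal{O}$.

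First I would verify that $G$ is a full-rank subgroup of $\mathcal{O}^\times$. Each $\eta_j^{(i)} = \alpha^{(i)} - G_j(n)$ lies in $\mathcal{O}$, and since $\eta_1^{(i)} \cdots \eta_d^{(i)} = 1$ by Equation~\eqref{eq:etamultiplyto1}, every $\eta_j^{(i)}$ is a unit. As $\KK_n$ is totally real of degree $d$, Dirichlet's theorem gives $\mathrm{rank}(\mathcal{O}^\times) = d-1$, matching the number of generators of $G$ modulo torsion. Next, fixing embeddings $\sigma_k \colon \alpha^{(i)} \mapsto \alpha^{(k)}$ so that $\sigma_k(\eta_j^{(i)}) = \eta_j^{(k)}$, the regulator of $G$ is the absolute value of the determinant of any $(d-1)\times(d-1)$ submatrix of $(\log\abs{\eta_j^{(k)}})_{k=1,\ldots,d;\,j=1,\ldots,d-1}$ obtained by omitting one row (the choice is immaterial because the row sums vanish, as $\abs{N_{\KK_n/\QQ}(\eta_j^{(i)})} = 1$). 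Picking the rows $k = 1, \ldots, d-1$ produces precisely the matrix $B_{d-1}$, and Lemma~\ref{lem:etamatrixdet} then gives $R_G = \abs{\det B_{d-1}} = \landauTheta{n^{d-1}}$. In particular $\det B_{d-1} \neq 0$ for $n$ large, which retroactively justifies the full rank.

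Since $G$ has full rank and contains the torsion $\pm 1$ of $\mathcal{O}^\times$, the identity $R_G = I \cdot R_\mathcal{O}$ is available, and combining it with Corollary~\ref{cor:regulatorbound} yields
\[
    I = \frac{R_G}{R_\mathcal{O}} = \frac{\landauO{n^{d-1}}}{\landauOmega{n}} = \landauO{n^{d-2}}.
\]
The substantive input is Lemma~\ref{lem:etamatrixdet}; the only minor subtlety is the matching of the regulator matrix with $B_{d-1}$, which is routine once the embeddings are fixed.
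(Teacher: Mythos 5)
Your proof is correct and follows the paper's own argument exactly: the regulator bound comes from identifying $R_G$ with $\abs{\det B_{d-1}}$ and invoking Lemma~\ref{lem:etamatrixdet}, and the index bound comes from $I = R_G/R_{\mathcal{O}}$ combined with Corollary~\ref{cor:regulatorbound}. The added remarks on why $G$ has full rank, why the row choice in the regulator matrix is immaterial, and why the generators are units are correct supporting details that the paper leaves implicit.
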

    \begin{proof}
        The estimate for the regulator follows from Lemma~\ref{lem:etamatrixdet} for $k=d-1$. The estimate for the index follows from the relation $I = R_G/R_{\mathcal{O}}$ and Corollary~\ref{cor:regulatorbound}.
    \end{proof}

%
%
\section{Proof of Theorem~\ref{thm:main}}
\label{sec:proof}

    We now proceed with the proof of Theorem~\ref{thm:main}. Let $x, y, n$ be integers satisfying $f_n(x, y) = \pm 1$. Note that for $y = 0$ this implies $x^d = \pm 1$ which leads to the solution $(x, y, n) = (\pm 1, 0, n)$ for every $n \in \NN$. If instead $y = \pm 1$, then $f_n(x, y) = \pm 1$ implies either
    \[
        (x-G_1(n)\, y) \cdots (x-G_d(n)\, y) = 0,
    \]
    from which we get the solutions $(\pm G_i(n), 1, n)$ for every $n \in \NN$ and $i = 1, \ldots, d$, or
    \[
        (x-G_1(n)\, y) \cdots (x-G_d(n)\, y) = \pm 2.
    \]
    
    Since the factors on the left are all distinct integers, there are no solutions if we have $d \geq 4$ of them. These are all solutions where $\abs{y} \leq 1$, and from now on we can (and must) assume that $\abs{y} \geq 2$.
    
    We will refer to the terms $x - \alpha^{(i)}y$ by $\beta^{(i)}$ and call $(x,y)$ a solution of type $j$ if
    \[
        \abs{ \beta^{(j)} } = \min\set{ \abs{\beta^{(1)}}, \ldots, \abs{\beta^{(d)}} },
    \]
    which implies by the triangle inequality for $i \neq j$, that
    \begin{equation}\label{eq:betailowerbound}
       2\abs{ \beta^{(i)} } \geq \abs{ \beta^{(i)} - \beta^{(j)} } = \abs{ y \br{ \alpha^{(j)} - \alpha^{(i)} } }.
    \end{equation}
    
    Analogously to Lemma~\ref{lem:rootapprox} and/or in view of Corollary~\ref{cor:rootbounds}, we can define the "correct" error term as $\gamma_\epsilon(j)$, where
    \begin{align*}
        \gamma_\epsilon(j) = 
        \begin{cases}
            \gamma_j^{j-1} \cdot \prod_{i=j+1}^{d-2} \gamma_i \cdot \gamma^2 \quad &\text{ if } j \not\in\set{d-1, d} \\
            \gamma^{d-2} \delta_d \quad &\text{ else}
        \end{cases}
    \end{align*}
    and we note that the factor $\gamma$ appears at least twice in any case. Combining this with $\beta^{(1)} \cdots \beta^{(d)} = f_n(x,y) = \pm 1$ gives
    \begin{equation}\label{eq:betajupperbound}
       \abs{ \beta^{(j)} } \leq \prod_{ \substack{ i = 1 \\ i\neq j } }^d \frac{ 2 }{ \abs{y} \abs{ \alpha^{(j)} - \alpha^{(i)} } } = \landauO{ \frac{1}{ \abs{y} \gamma_\epsilon(j)^{n} } }.
    \end{equation}
    
    Furthermore, if we add and then substract $G_j(n)\,y$ from $\beta^{(i)}$, for $i \neq j$, then we have
    \begin{align*}
       \log\abs{\beta^{(i)}} &= \log\abs{ -\alpha^{(i)}y + \beta^{(j)} + \alpha^{(j)}y } \\
       &= \log\abs{ -\eta_j^{(i)}y + \beta^{(j)} + \eta_j^{(j)} }.
    \end{align*}
    
    Together with Equations~\eqref{eq:betajupperbound} and \eqref{eq:etabounds}, this gives a representation for $\log\abs{\beta^{(i)}}$,
    \begin{equation}\label{eq:betaibylogy}
       \log\abs{ \beta^{(i)} } = \log\abs{y} + \log\abs{ \eta_j^{(i)} } + \landauO{ \frac{1}{ \abs{y} \gamma_\epsilon(j)^n } } \qquad i \in \set{1, \ldots d}\backslash\set{j}.
    \end{equation}
    
    A second representation can be obtained via the group $G = \langle -1, \eta_1^{(i)}, \ldots, \eta_{d-1}^{(i)} \rangle$. Since $\beta^{(i)} \in \ZZ[\alpha^{(1)}]^\times$, there are integers $b_1, \ldots, b_{d-1}$ such that for the index $I = [\ZZ[\alpha^{(1)}]^\times : G]$ the relation
    \begin{equation}\label{eq:betaibyetas}
        \log\abs{\beta^{(i)}} = \frac{b_1}{I} \log\abs{\eta_1^{(i)}} + \cdots + \frac{b_{d-1}}{I} \log\abs{\eta_{d-1}^{(i)}} \qquad i \in \set{1, \ldots d}\backslash\set{j}
    \end{equation}
    holds. By comparing both representations, we want to derive a lower bound for $\log\abs{y}$. 

\subsection{A double-exponential lower bound for the solution.}

    We solve Equation~\eqref{eq:betaibyetas} using Cramer's rule and get
    \begin{equation}\label{eq:cramer}
        R \frac{b_k}{I} = u_k \log\abs{y} + v_k + \landauO{ \frac{ n^{d-2} }{ \abs{y} \gamma_\epsilon(j)^n } }
    \end{equation}
    for $1 \leq k \leq d-1$, where
    \begin{align*}
        u_k &= \det \left( \log\abs{\eta_1^{(i)}}, \ldots, \log\abs{\eta_{k-1}^{(i)}}, 1, \log\abs{\eta_{k+1}^{(i)}}, \ldots, \log\abs{\eta_{d-1}^{(i)}} \right)_{i \neq j}, \\
        v_k &= \det \left( \log\abs{\eta_1^{(i)}}, \ldots, \log\abs{\eta_{k-1}^{(i)}}, \log\abs{\eta_j^{(i)}}, \log\abs{\eta_{k+1}^{(i)}}, \ldots, \log\abs{\eta_{d-1}^{(i)}} \right)_{i \neq j}.
    \end{align*}

    If we consider for some $\lambda_0, \lambda_1, \ldots, \lambda_{d-1}$ the linear combinations
    \[
        \myvec{b} = \lambda_0 I + \sum_{k=1}^{d-1} \lambda_k b_k, \quad \myvec{u} = \sum_{k=1}^{d-1} \lambda_k u_k, \quad \myvec{v} = \lambda_0 R + \sum_{k=1}^{d-1} \lambda_k v_k,
    \]
    then this preserves Identity~\eqref{eq:cramer}, in the sense that
    \begin{equation}\label{eq:cramerlincomb}
        R \frac{\myvec{b}}{I} = \myvec{u} \log\abs{y} + \myvec{v} + \landauO{ \frac{ n^{d-2} }{ \abs{y} \gamma_\epsilon(j)^n } }.
    \end{equation}

    We now distinguish between different cases for the type $j$ and show that, for a suitable $\myvec{u}$, $\log\abs{y}$ grows exponentially in $n$.

    \emph{Case $j \leq d-2$:}

    If $j \leq d-2$, then the column $\br{ \log\abs{ \eta_j^{(i)} } }_{i\neq j}$ appears twice in $v_k$, so $v_k = 0$, for all $k\neq j$. If we choose $\lambda_0 = \cdots = \lambda_{d-2} = 0$ and $\lambda_{d-1} = 1$, then Equation~\eqref{eq:cramerlincomb} is
    \begin{equation}\label{eq:cramerfornicej}
        R\frac{b_{d-1}}{I} = u_{d-1} \log\abs{y} + v_{d-1} + \landauO{ \frac{ n^{d-2} }{ \abs{y} \gamma_\epsilon(j)^n } },
    \end{equation}
    and since $d-1 \neq j$, we have $v_{d-1} = 0$.
    
    In $u_{d-1}$ we subtract the penultimate row $i=d-1$ from the last row $i=d$. Writing $l_{i'}^{(i)}$ for $\log\abs{ \eta_{i'}^{(i)} }$ and $\myvec{l}_{i'}$ for the corresponding column vector (excluding by context the last two rows), we have
    \[
        u_{d-1} =
        \begin{vmatrix}
            \myvec{l}_1 & \cdots & \myvec{l}_{d-2} & \myvec{1} \\
            l_1^{(d-1)} & \cdots & l_{d-2}^{(d-1)} & 1 \\
            l_1^{(d)}-l_1^{(d-1)} & \cdots & l_{d-2}^{(d)}-l_{d-2}^{(d-1)} & 0
        \end{vmatrix}.
    \]

    The entries in the last row are very small  for all $i=2, \ldots, d-2$:  We use Lemma~\ref{lem:rootapprox}, factor the dominant term $g\gamma^n$ and write $\log\abs{1+x} = x + \landauO{x^2}$, since the remaining term surely have absolute value $<1$ for sufficiently large $n$.
    \begin{align*}
        l_i^{(d)} - l_i^{(d-1)} &= \log\abs{ \frac{ \alpha^{(d)} - G_i(n) }{ \alpha^{(d-1)} - G_i(n) } } \\
        &= \log\abs{ \frac{ g \gamma^n \br{ 1 + \frac{h_d}{g} \br{\frac{\delta_d}{\gamma}}^n - \frac{G_i(n)}{g\gamma^n} + \landausmallo{ \abs{\frac{\delta_{d}}{\gamma}}^n } } }{ g \gamma^n \br{ 1 + \frac{h_{d-1}}{g} \br{\frac{\delta_{d-1}}{\gamma}}^n - \frac{G_i(n)}{g\gamma^n} + \landausmallo{ \abs{\frac{\delta_{d-1}}{\gamma}}^n } } } } \\
        &= \frac{h_d}{g} \br{\frac{\delta_d}{\gamma}}^n - \frac{G_i(n)}{g \gamma^n} - \frac{h_{d-1}}{g} \br{\frac{\delta_{d-1}}{\gamma}}^n + \frac{G_i(n)}{g \gamma^n} \\
        &\quad+ \landausmallo{ \abs{\frac{\delta_d}{\gamma}}^n } + \landauO{ \frac{\max\set{\abs{\delta_d}, \gamma_i}^{2n}}{\gamma^{2n}} },
    \end{align*}
    (the same holds true for $i=1$ if we put $\gamma_1 = 0$). By Condition~\eqref{cond:seconddominantroots} in Theorem~\ref{thm:main} we have that $\gamma_{d-2}^{2n} = \landausmallo{\gamma^n \delta_d^n}$, i.e the $O$-term is absorbed by the $o$-term. Succinctly put, we can say that
    \begin{equation}\label{eq:ld_ld-1approx}
        l_i^{(d)} - l_i^{(d-1)} = \frac{h_d}{g} \br{\frac{\delta_d}{\gamma}}^n + \landausmallo{\abs{\frac{\delta_d}{\gamma}}^n}.
    \end{equation}
 
    We expand $u_{d-1}$ on the last row and separate the explicit term, which we can then factor, from the $\landausmallo{\abs{\frac{\delta_d}{\gamma}}^n}$ in $l_i^{(d)} - l_i^{(d-1)}$ and shift the latter into the error term. The minors, other than the last one which has coefficient $0$ in the expansion, are all of order $\landauO{n^{d-3}}$ by Equation~\eqref{eq:etabounds} and thus
    \[
        u_{d-1} = 
        \frac{h_d}{g} \br{\frac{\delta_d}{\gamma}}^n
        \begin{vmatrix}
            \myvec{l}_1 & \cdots & \myvec{l}_{d-2} & \myvec{1} \\
            l_1^{(d-1)} & \cdots & l_{d-2}^{(d-1)} & 1 \\
            1 & \cdots & 1 & 0
        \end{vmatrix}
        + \landausmallo{ n^{d-3} \abs{\frac{\delta_d}{\gamma}}^n}
    \]

    We then subtract $l_1^{(d-1)}$ times the last row from the penultimate row. With the same arguments as above, except that now the $\delta_{d-1}$ terms cancel, we have for $i = 2, \ldots, d-2$ that
    \begin{align}\label{eq:li-l1approx}
        l_i^{(d-1)} - l_1^{(d-1)} &= \log\abs{\frac{ \alpha^{(d-1)} - G_i(n) }{\alpha^{(d-1)}}} \nonumber \\
        &= -\frac{g_i}{g} \br{\frac{\gamma_i}{\gamma}}^n + \landausmallo{ \frac{\max\set{\abs{\delta_{d-1}}, \gamma_i}^n}{\gamma^n} } 
    \end{align}
    (for $i=1$ the entry is $0$). We expand by the penultimate row. All minors where the factor is $l_i^{(d-1)}-l_1^{(d-1)}$ we can shift into an error term $\landauO{n^{d-3} \br{\frac{\gamma_{d-2}}{\gamma}}^n }$. Explicitly writing only the last minor (with factor $1$) gives
    \[
        u_{d-1}
        = \frac{h_d}{g} \br{\frac{\delta_d}{\gamma}}^n
        \left(-
        \begin{vmatrix}
            \myvec{l}_1 & \cdots & \myvec{l}_{d-2} \\
            1 & \cdots & 1
        \end{vmatrix}
        + \landauO{ n^{d-3} \br{\frac{\gamma_{d-2}}{\gamma}}^n }
        \right) + \landausmallo{n^{d-3} \abs{\frac{\delta_d}{\gamma}}^n }.
    \]

    We then multiply the last row by the constant $l_1^{(d)} + l_1^{(d-1)} = \log\abs{\alpha^{(d)}} + \log\abs{\alpha^{(d-1)}}$, which is of order $\landauTheta{n}$. Going from $l_1^{(d)}$ to $l_i^{(d)}$ for each $i = 2, \ldots, d-2$ is then possible with an error of
    \[
        \log\abs{ \frac{\alpha^{(d)}}{\alpha^{(d)} - G_i(n)} } = -\log\abs{1 - \frac{G_i(n)}{\alpha^{(d)}}} = \landauO{\abs{\frac{G_i(n)}{\alpha^{(d)}}}} = \landauO{ \br{\frac{\gamma_i}{\gamma}}^n }.
    \]

    Similarly, we can go from $l_1^{(d-1)}$ to $l_i^{(d-1)}$. Taken together, this means that
    \[
        \begin{vmatrix}
            \myvec{l}_1 & \cdots & \myvec{l}_{d-2} \\
            1 & \cdots & 1
        \end{vmatrix}
        =
        \landauTheta{\frac{1}{n}}
        \begin{vmatrix}
            \myvec{l}_1 & \cdots & \myvec{l}_{d-2} \\
            l_1^{(d)}+l_1^{(d-1)} & \cdots & l_{d-2}^{(d)}+l_{d-2}^{(d-1)}
        \end{vmatrix}
        + \landauO{n^{d-3} \br{\frac{\gamma_i}{\gamma}}^n }
    \]

    If we now add all the other rows to the last one, the entries sum to $-l_i^{(j)}$ according to Equation~\eqref{eq:etamultiplyto1}. This means that
    \[
        \begin{vmatrix}
            \myvec{l}_1 & \cdots & \myvec{l}_{d-2} \\
            1 & \cdots & 1
        \end{vmatrix}
        =
        -\landauTheta{\frac{1}{n}}
        \begin{vmatrix}
            \myvec{l}_1 & \cdots & \myvec{l}_{d-2} \\
            l_1^{(j)} & \cdots & l_1^{(j)}
        \end{vmatrix}
        + \landauO{n^{d-3} \br{\frac{\gamma_i}{\gamma}}^n }
    \]

    After a suitable swapping of rows, the determinant is exactly the one from Lemma~\ref{lem:etamatrixdet} for $k=d-2$, and thus of order $\landauTheta{n^{d-2}}$. We can absorb the error term and get that
    
    \[
        \begin{vmatrix}
            \myvec{l}_1 & \cdots & \myvec{l}_{d-2} \\
            1 & \cdots & 1
        \end{vmatrix}
        = \pm \landauTheta{n^{d-3}}
    \]
    and thus
    \[
        \abs{u_{d-1}} = \landauTheta{ n^{d-3} \abs{\frac{\delta_d}{\gamma}}^n } + \landausmallo{ n^{d-3} \abs{\frac{\delta_d}{\gamma}}^n } = \landauTheta{n^{d-3} \abs{\frac{\delta_d}{\gamma}}^n}.
    \]

    Going back to Equation~\eqref{eq:cramerfornicej} and plugging in our asymptotic for $u_{d-1}$ gives
    \[
        R \frac{\abs{b_{d-1}}}{I} = \landauTheta{ n^{d-3} \abs{ \frac{\delta_d}{\gamma} }^n } \log\abs{y} + \landauO{ \frac{ n^{d-2} }{ \abs{y} \gamma_\epsilon(j)^n } }.
    \]

    Since $\gamma_\epsilon(j)$ contains the factor $\gamma$ at least twice, the error-term (asymptotically) cannot cancel the $\Theta$-term, i.e.
    \[
        R\frac{\abs{b_{d-1}}}{I} = \landauTheta{ n^{d-3} \abs{\frac{\delta_d}{\gamma}}^n } \log\abs{y}
    \]
    and, in particular, $R\frac{\abs{b_d-1}}{I}$ and thus $\abs{b_{d-1}}$ is non-zero. Since $b_{d-1}$ is an integer, we have $\abs{b_{d-1}} \geq 1$. Furthermore, we have $\frac{R}{I} = \landauOmega{n}$ by Lemma~\ref{lem:regNindex}. Going back to the above equation, this gives
    \begin{equation}\label{eq:logyboundgoodj}
        \log\abs{y} = \landauOmega{n^{-(d-4)} \abs{\frac{\gamma}{\delta_d}}^n  }.
    \end{equation}

    \emph{Case $j=d-1$ or $j=d$:} 
    For $j=d-1$ and $k < d-1$ we again have the column $\log\abs{\eta_j^{(i)}}_{i\neq j} = \myvec{l}_j$ twice in $v_k$ and thus $v_k = 0$. This is not the case for $j=d$, instead we take $\myvec{v} = v_{d-2} - v_{d-3}$. After swapping the third and penultimate columns in $v_{d-3}$, we can join the determinants, and adding every other column to the third-last one gives us, by Equation~\eqref{eq:etamultiplyto1},
    \begin{align*}
        \myvec{v} &=
        \begin{vmatrix}
            \myvec{l}_1 & \cdots & \myvec{l}_{d-4} & \myvec{l}_{d-3} & \myvec{l}_d & \myvec{l}_{d-1}
        \end{vmatrix}
        -
        \begin{vmatrix}
            \myvec{l}_1 & \cdots & \myvec{l}_{d-4} & \myvec{l}_{d} & \myvec{l}_{d-2} & \myvec{l}_{d-1}
        \end{vmatrix} \\
        &=
        \begin{vmatrix}
            \cdots & \myvec{l}_{d-3} & \myvec{l}_d & \myvec{l}_{d-1}
        \end{vmatrix}
        +
        \begin{vmatrix}
            \cdots & \myvec{l}_{d-2} & \myvec{l}_{d} & \myvec{l}_{d-1}
        \end{vmatrix} \\
        &=
        \begin{vmatrix}
            \cdots & (\myvec{l}_{d-3}+\myvec{l}_{d-2}) & \myvec{l}_d & \myvec{l}_{d-1}
        \end{vmatrix}
        =
        \begin{vmatrix}
            \cdots & 0 & \myvec{l}_d & \myvec{l}_{d-1}
        \end{vmatrix}
        = 0.
    \end{align*}

    In both cases, taking $\myvec{v} = v_{d-2} - v_{d-3}$ gives us $\myvec{v} = 0$. We calculate the corresponding $\myvec{u}=u_{d-2}-u_{d-3}$, which, with the analouge of the above calculation, gives us
    \begin{align*}
        \myvec{u} &=
        \begin{vmatrix}
            \myvec{l}_1 & \cdots & \myvec{l}_{d-4} & (\myvec{l}_{d-3}+\myvec{l}_{d-2}) & \myvec{1} & \myvec{l}_{d-1}
        \end{vmatrix} \\
        &=
        \begin{vmatrix}
            \cdots & -\myvec{l}_d & \myvec{1} & \myvec{l}_{d-1}
        \end{vmatrix} \\
        &=
        \begin{vmatrix}
            \cdots & (\myvec{l}_{d-1} - \myvec{l}_d) & \myvec{1} & \myvec{l}_{d-1}
        \end{vmatrix}.
    \end{align*}

    The matrix has the rows $i \in \set{1, \ldots, d-2, d-1, d} \backslash \set{j}$, so the penultimate row is $i=d-2$, while the last row is either $i=d$ or $i=d-1$, depending on whether $j=d-1$ or $j=d$.

    Computing the last entry in the third-last column gives us by Lemma~\ref{sec:lemmas} either
    \begin{align*}
        \log\abs{ \frac{ \alpha^{(d-1)} - G_{d-1}(n) }{ \alpha^{(d-1)} - G_d(n) } } = -\landauTheta{n} \qquad \text{ or } \qquad \log\abs{ \frac{ \alpha^{(d)} - G_{d-1}(n) }{ \alpha^{(d)} - G_d(n) } } = \landauTheta{n},
    \end{align*}
    so, up to sign, a $\landauTheta{n}$. The entries in all other rows $i \leq d-2$ are instead
    \begin{align*}
        \log\abs{ \frac{ \alpha^{(i)} - G_{d-1}(n) }{ \alpha^{(i)} - G_d(n) } } = -\frac{h_d}{g} \br{\frac{\delta_d}{\gamma}}^n + \landausmallo{ \abs{\frac{\delta_d}{\gamma}}^n } = \landauO{ \abs{\frac{\delta_d}{\gamma}}^n }
    \end{align*}
    analogous to Equation~\eqref{eq:ld_ld-1approx}. We expand the determinant $\myvec{u}$ by the third-last column and shift all but the last minor into the error term, which gives
    \[
        \pm \myvec{u} = \landauTheta{n}
        \begin{vmatrix}
            \myvec{l}_1 & \cdots & \myvec{l}_{d-4} & \myvec{1} & \myvec{l}_{d-1}
        \end{vmatrix}
        + \landauO{ n^{d-3} \abs{ \frac{\delta_d}{\gamma} }^n }.
    \]

    The matrix now has only the rows $i \in \set{1, \ldots, d-2}$. We add $-l_{d-1}^{(1)}$ times the penultimate column to the last column. For $i=1$ the entry is then $0$ and for $i=2, \ldots d-2$ this then gives us that
    \[
        \log\abs{ \eta_{d-1}^{(i)} } - \log\abs{ \eta_{d-1}^{(1)} } = -\landauTheta{\br{\frac{\gamma_i}{\gamma}}^n},
    \]
    analogous to Equation~\eqref{eq:li-l1approx}. We expand by this column, the last entry $i=d-2$ dominates and we get
    \[
        \begin{vmatrix}
            \myvec{l}_1 & \cdots & \myvec{l}_{d-4} & \myvec{1} & \myvec{l}_{d-1}
        \end{vmatrix}
        = 
        \landauTheta{\br{\frac{\gamma_{d-2}}{\gamma}}^n}
        \begin{vmatrix}
            \myvec{l}_1 & \cdots & \myvec{l}_{d-4} & \myvec{1}
        \end{vmatrix}
        + \landausmallo{ n^{d-4} \br{\frac{\gamma_{d-2}}{\gamma}}^n }.
    \]

    We multiply the last column by $-l_{d-2}^{(d-3)}-l_{d-1}^{(d-3)}-l_d^{(d-3)} = \landauTheta{-n}$. Using the same argument as in Equation~\eqref{eq:ld_ld-1approx}, we can go from $l_{d-2}^{(d-3)}$ [ $l_{d-1}^{d-3}$, resp. $l_d^{(d-3)}$] to $l_{d-2}^{(i)}$ [$l_{d-1}^{(i)}$, resp. $l_d^{(i)}$] and make an error of order $\landauO{\abs{\frac{\gamma_{d-3}}{\gamma_{d-2}}}^n}$ [$\landauO{\abs{\frac{\gamma_{d-3}}{\gamma}}^n}$]. Taken together, this means that
    \begin{align*}
        \begin{vmatrix}          
            \myvec{l}_1 & \cdots & \myvec{l}_{d-4} & \myvec{1}
        \end{vmatrix}
        = &-\landauTheta{\frac{1}{n}}
        \begin{vmatrix}
            \myvec{l}_1 & \cdots & \myvec{l}_{d-4} & (-\myvec{l}_{d-2}-\myvec{l}_{d-1}-\myvec{l}_d)
        \end{vmatrix} \\
        &+ \landauO{n^{d-4}\abs{\frac{\gamma_{d-3}}{\gamma_{d-2}}}^n}
    \end{align*}

    If we subtract all other columns from the last one, then it sums to $\myvec{l}_{d-3}$ according to Equation~\eqref{eq:etamultiplyto1}. Thus the determinant is of order $\landauTheta{n^{d-3}}$ by Lemma~\ref{lem:etamatrixdet}. Put together, this means that
    \[
        \begin{vmatrix}
            \myvec{l}_1 & \cdots & \myvec{l}_{d-4} & \myvec{1}
        \end{vmatrix}
        = -\landauTheta{n^{d-4}} + \landauO{n^{d-4}\abs{\frac{\gamma_{d-3}}{\gamma_{d-2}}}^n} = -\landauTheta{n^{d-4}}.
    \]

    If we go back one equation further and plug this in, we have
    \begin{align*}
        \begin{vmatrix}
            \myvec{l}_1 & \cdots & \myvec{l}_{d-4} & \myvec{1} & \myvec{l}_{d-1}
        \end{vmatrix}
        &= - \landauTheta{ n^{d-4} \br{\frac{\gamma_{d-2}}{\gamma}}^n } + \landausmallo{ n^{d-4} \br{\frac{\gamma_{d-2}}{\gamma}}^n } \\
        &= - \landauTheta{ n^{d-4} \br{\frac{\gamma_{d-2}}{\gamma}}^n } 
    \end{align*}

    So for $\myvec{u}$, in conjunction with $\abs{\delta_d} < \gamma_{d-2}$ from Condition~\eqref{cond:seconddominantroots}, we get that
    \[
        \abs{\myvec{u}} = \landauTheta{n^{d-3} \br{\frac{\gamma_{d-2}}{\gamma}}^n } + \landauO{ n^{d-3} \abs{\frac{\delta_d}{\gamma}}^n } = \landauTheta{n^{d-3} \abs{\frac{\gamma_{d-2}}{\gamma_d}}^n }.
    \]

    We plug this and $\myvec{v} = 0$ into Equation~\eqref{eq:cramerlincomb} and get
    \[
        R \frac{\abs{\myvec{b}}}{I} = \landauTheta{n^{d-3} \abs{\frac{\gamma_{d-2}}{\gamma}}^n} \log\abs{y} + \landauO{\frac{n^{d-2}}{\abs{y} \gamma_\epsilon(j)^n }}.
    \]

    Again, $\gamma_\epsilon(j)$ contains the factor $\gamma$ (at least) twice, so the error term is asymptotically negligible. So the right-hand side, and hence $\myvec{b}$, is non-zero. We derive $\abs{\myvec{b}} \geq 1$, which together with $\frac{R}{I} = \landauOmega{n}$ implies
    \begin{equation}\label{eq:logyboundbadj}
        \log\abs{y} = \landauOmega{ n^{-(d-4)} \br{ \frac{\gamma}{\gamma_{d-2}} }^n }.
    \end{equation}

    If we compare the two bounds from Equations~\eqref{eq:logyboundgoodj} and \eqref{eq:logyboundbadj}, the worse one, Equation~\eqref{eq:logyboundbadj}, holds regardless of the type $j$ of the solution.

\subsection{An exponential-polynomial upper bound for the solution.}

    Returning to our notation $x-\alpha^{(i)}y = \beta^{(i)}$ for $i \in \set{1, \ldots, d}$, we can also take any three $i=i_1, i_2, i_3$ and eliminate both $x$ and $y$ from these equations, giving the relation
    \[
        \br{ \alpha^{(i_3)} - \alpha^{(i_2)} } \beta^{(i_1)} + \br{ \alpha^{(i_1)} - \alpha^{(i_3)} } \beta^{(i_2)} + \br{ \alpha^{(i_2)} - \alpha^{(i_1)} } \beta^{(i_3)},
    \]
    often called Siegel's equation. The standard application of Baker's method (e.g. \cite{gaal91}) is to rewrite Siegel's identity in an $S$-unit equation and to apply lower bounds to the associated linear form in logarithms. The same approach can also be used for parametrised Thue equations (e.g. \cite{tho90}, for a survey, also see \cite{heub05}) to (try to) derive asymptotic bounds on the size of the solutions. Alternatively, one can try to use Bugeaud and Gy\H{o}ry's explicit bounds \cite{bugy96} directly to obtain the same asymptotic bounds with often much worse numerics due to their more general nature.

    Since it is not too much work, we derive an asymptotic polynomial upper bound for $\log\abs{y}$ ourselves, and set $(i_1, i_2, i_3) = (j, k, l)$ in Siegel's identity for the type $j$ and some $k,l$. If $j \in \set{d-1, d}$, it is advantageous to choose $k, l \not \in \set{d-1, d}$, e.g. $k=1, l=2$, whereas if $j \leq d-2$, it is advantageous to choose $k=d, l=d-1$. 
    
    Dividing by the second addend and subtracting the third (flipping the sign of $\alpha^{(k)}-\alpha^{(j)}$) gives the $S$-unit equation
    \begin{equation}\label{eq:Siegel_S-unit}
        \frac{ \alpha^{(l)}-\alpha^{(k)} }{ \alpha^{(j)}-\alpha^{(l)} }\frac{ \beta^{(j)} }{ \beta^{(k)} } + 1 = \frac{ \alpha^{(j)}-\alpha^{(k)} }{ \alpha^{(j)}-\alpha^{(l)} } \frac{ \beta^{(l)} }{ \beta^{(k)} }.
    \end{equation}

    From Equations~\eqref{eq:betajupperbound} and \eqref{eq:betailowerbound} we obtain
    \[
        \frac{ \beta^{(j)} }{ \beta^{(k)} } = \landauO{ \frac{1}{ \abs{y}^2 \gamma_\epsilon(j)^n \abs{\alpha^{(j)} - \alpha^{(k)}} } }.
    \]
    
    Our choice of $k,l$ ensures optimal conditions in Corollary~\eqref{cor:regulatorbound} in the sense that
    \[
        \frac{ \alpha^{(l)} - \alpha^{(k)} }{ \br{ \alpha^{(j)}-\alpha^{(l)} } \br{ \alpha^{(j)}-\alpha^{(k)} } } = \landauO{ \frac{ \max\set{ \abs{\delta_d}, \gamma_2 }^n }{ \gamma^{2n} } },
    \]
    and thus we get in Equation~\eqref{eq:Siegel_S-unit} that
    \[
        \frac{ \alpha^{(l)}-\alpha^{(k)} }{ \alpha^{(j)}-\alpha^{(l)} }\frac{ \beta^{(j)} }{ \beta^{(k)} } + 1 = 1 + \landauO{ \frac{ \max\set{ \abs{\delta_d}, \gamma_2 }^n }{ \abs{y}^2 \gamma_\epsilon(j)^n \gamma^{2n} } },
    \]
    where the factor $\gamma^n$ appears at least four times in the denominator and the factor in the numerator more than cancels out with another factor in $\gamma_\epsilon(j)^n$. Going back to Equation~\eqref{eq:Siegel_S-unit}, we now have for the right-hand side that its logarithm is
    \begin{equation}\label{eq:linearform}
        \log\abs{ \frac{ \beta^{(l)} }{ \beta^{(k)} } } + \log\abs{ \frac{ \alpha^{(j)}-\alpha^{(k)} }{ \alpha^{(j)}-\alpha^{(l)} } } = \landauO{ \frac{ \max\set{ \abs{\delta_d}, \gamma_2 }^n }{ \abs{y}^2 \gamma_\epsilon(j)^n \gamma^{2n} } }.
    \end{equation}

    We want to apply the following lower bound for linear forms in logarithms due to Baker and Wüstholz\cite{bawh91}.
    \begin{thm}\label{thm:bakerwholz}
        Let $\alpha_1, \ldots, \alpha_n$ be three real algebraic numbers greater than $1$ and $b_1, \ldots, b_n$ be integers. Let $K$ be the number field generated by $\alpha_1, \ldots \alpha_n$ over the rationals $\QQ$ and let $d$ be its degree. Put, for the standard logarithmic Weil height $h$,
        \[
            h'(\alpha) = \frac{1}{d} \max\set{ h(\alpha), \abs{\log\alpha}, 1 }.
        \]

        If $\Lambda = b_1 \log\alpha_1 + \cdots + b_n \log\alpha_n \neq 0$, then
        \[
            \log\abs{\Lambda} > - c(n,d) h'(\alpha_1) \cdots h'(\alpha_n) h'(\myvec{b}),
        \]
        where $\myvec{b} = (b_1 : \cdots : b_n)$.
    \end{thm}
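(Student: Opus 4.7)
The final statement is the Baker--Wüstholz lower bound on linear forms in logarithms, a deep theorem from their 1991 paper. In the context of the present article, the natural ``proof'' is a citation rather than an original argument; nonetheless, let me sketch the strategy of Baker's method which underlies it.

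The plan is to argue by contradiction. Assume $\abs{\Lambda}$ is far smaller than the asserted bound. First I would construct an auxiliary function $\Phi(z_1,\ldots,z_m)$, essentially a polynomial in the exponentials $\alpha_1^{z},\ldots,\alpha_n^{z}$ with integer coefficients of controlled size. Using the Thue--Siegel lemma (or a Bombieri--Vaaler refinement) to solve a heavily under-determined linear system, one arranges that $\Phi$ vanishes to high order at many rational integer tuples. This is the standard auxiliary-function construction underpinning all forms of Baker's method, and the sizes of the parameters are dictated by the heights $h'(\alpha_i)$ and the height of $\myvec{b}$ from the statement.

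Next I would extrapolate: a Schwarz-type analytic estimate together with the hypothesised smallness of $\Lambda$ forces $\Phi$ to remain small on a much larger set of integer tuples, and an arithmetic argument bounding the denominators of the algebraic numbers involved promotes this analytic smallness to exact vanishing at new points. Iterating the extrapolation produces an auxiliary function vanishing to enormous order along a finitely generated subgroup of an appropriate commutative algebraic group.

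The hard part, and what distinguishes Baker--Wüstholz from its predecessors, is the zero estimate on commutative group varieties due to Wüstholz: such high-order vanishing forces $\Phi \equiv 0$, contradicting its construction. This zero estimate, proved via Wüstholz's analytic subgroup theorem and rather elaborate algebraic geometry of linear algebraic groups, is the genuine obstacle; it is also responsible for the clean dependence on $h'(\alpha_i)$ in the constant, rather than the more awkward height terms appearing in earlier quantitative versions of Baker's theorem. Carefully tracking all numerical constants to produce an explicit $c(n,d)$ is a further bookkeeping effort. In the immediate context of the paper, the theorem will be applied to the linear form on the left-hand side of \eqref{eq:linearform}, yielding a lower bound whose confrontation with the upper bound there produces a polynomial-in-$n$ upper estimate for $\log\abs{y}$; juxtaposed with the double-exponential lower bound \eqref{eq:logyboundbadj}, this forces $n$ to be bounded by an effectively computable constant, completing the proof of Theorem~\ref{thm:main}.
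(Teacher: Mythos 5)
You correctly recognize that Theorem~\ref{thm:bakerwholz} is not proved in the paper; it is quoted verbatim from Baker and W\"ustholz's 1991 paper (the citation \cite{bawh91} immediately precedes the statement), and the paper offers no proof. Your high-level sketch of the auxiliary-function construction, extrapolation, and W\"ustholz's zero estimate on commutative group varieties is a fair description of the underlying machinery, but it is extraneous to the paper's treatment, which treats the result as a black box to be applied to the linear form in Equation~\eqref{eq:linearform}.

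One small caution worth flagging: the statement as reproduced in the paper contains a harmless typographical slip (``three'' should read ``$n$''), and your sketch inherits the paper's convention that the dependence on the coefficient vector enters through $h'(\myvec{b})$. In the original Baker--W\"ustholz formulation the last factor is $\log B$ where $B = \max\set{\abs{b_1}, \ldots, \abs{b_n}, e}$ (equivalently, up to normalization, $h'(\myvec{b})$ where $\myvec{b}$ is regarded as a projective point). Since the paper only uses the theorem asymptotically, with both logarithm coefficients equal to $1$, this normalization issue has no bearing on the argument.
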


    The coefficients of our linear form are both $1$, so we can disregard their contribution to the lower bound. To bound the second term in Equation~\eqref{eq:linearform}, we use the properties
    \[
        h(\alpha \pm \beta) \leq  h(\alpha) + h(\beta) + \log 2, \quad h(\alpha \cdot \beta) \leq h(\alpha) + h(\beta)
    \]
    of the logarithmic Weil height (for an overview of this notion of height and its various properties, see, for example, Chapter~3 of Waldschmidt's book~\cite{waldBook}.). 
    
    Since $\alpha^{(j)}, \alpha^{(k)}, \alpha^{(l)}$ are conjugates and therefore of equal height, this gives
    \[
        h\br{ \frac{ \alpha^{(j)}-\alpha^{(k)} }{ \alpha^{(j)}-\alpha^{(l)} } } = \landauO{ h(\alpha^{(j)}) }.
    \]

    Using the connection to the Mahler measure of the minimal polynomial $f$, combined with Lemma~\ref{lem:rootapprox}, we can easily conclude $h(\alpha^{(j)}) = \landauO{n}$.

    For the height of $\beta^{(i)}$ we use Equation~\eqref{eq:betaibyetas} and get
    \[
        h(\beta^{(k)}) \leq h(b_1) h(\eta_1^{(k)}) + \cdots  + h(b_{d-1}) h(\eta_{d-1}^{(i)}).
    \]

    Plugging in the definition of $\eta_i^{(k)}$, we have $h(\eta_i^{(k)}) \leq h(\alpha^{(k)}) + h( G_i(n) ) + \log 2$, the height of the integer $G_i(n)$ is the logarithm of its absolute value, and so we have $h(\eta_i^{(k)}) = \landauO{n}$ again.

    For the heights $h(b_i) = \max\set{\log\abs{b_i}, 0}$, we look again at the system of equations~\eqref{eq:betaibyetas}, denote it by $\myvec{\beta} = \myvec{ \eta } \cdot \frac{1}{I} \myvec{ b } $ and consider its inverse problem $\myvec{ \eta^{-1}  \beta } = \frac{1}{I} \myvec{b} $. We can do this: The matrix $\myvec{\eta}$ is the matrix from Lemma~\ref{lem:etamatrixdet} for $k=d-1$ and therefore has a non-zero determinant for $j = d$. For $j \neq d$ we add all the other rows $i\neq j$ to the row $i=d$ and obtain the negative of the row $i=j$ by Equation~\eqref{eq:etamultiplyto1}. This changes at most the sign of the determinant and not the invertibility of $\myvec{\eta}$.

    Considering the system of equations $\myvec{ \eta^{-1} \beta } = \frac{1}{I} \myvec{b}$, we then take the (column-wise) maximum norm $\norm{\cdot}_\infty$, which gives
    \[
        \frac{1}{I} \norm{ \myvec{b} }_\infty = \norm{ \myvec{ \eta^{-1} \beta } }_\infty \leq \norm{ \myvec{ \eta^{-1} } }_\infty \cdot \norm{ \myvec{\beta} }_\infty.
    \]

    We have $I = \landauO{n^{d-2}}$ by Lemma~\ref{lem:regNindex}, $\det \myvec{\eta} = \landauTheta{n^{d-1}}$ by Lemma~\ref{lem:etamatrixdet}, $\log\abs{\beta^{(i)}} = \landauO{ \log\abs{y} + \log\abs{\eta_j^{(i)}} } $ by Equation~\eqref{eq:betaibylogy} and $\log\abs{\eta_j^{(i)}} = \landauO{n}$ by definition and Lemma~\ref{lem:rootapprox}. Taken together, this gives us
    \[
        h(b_i) = \max\set{\log\abs{b_i}, 0} = \landauO{ \max\set{ \log\log\abs{y}, \log n, 0 } },
    \]
    and we can assume $h(b_i) = \landauO{ \log\log\abs{y} }$, as otherwise we would already have $\log\abs{y} = \landauO{n}$.

    If we plug everything into Theorem~\ref{thm:bakerwholz} (the asymptotic bounds don't change for the modified height $h'$), then we get
    \[
        \log\abs{ \log\abs{ \frac{ \beta^{(l)} }{ \beta^{(k)} } } + \log\abs{ \frac{ \alpha^{(j)}-\alpha^{(k)} }{ \alpha^{(j)}-\alpha^{(l)} } } } = - \landauOmega{ n^2 \log\log\abs{y} }.
    \]

    If we compare this with the upper bound of Equation~\eqref{eq:linearform}, we get
    \[
        n \log\abs{y} = \landauO{ n^2 \log\log\abs{y} },
    \]
    which implies $\log\abs{y} = \landauO{ n \log n }$: If the implied constant is $c$, i.e.
    \[
        \log\abs{y} \leq c\, n \log\log\abs{y},
    \]
    then the assertion is true if, for example, the relation $\log\abs{y} \leq 2c\, n \log n$ holds. If instad $\log\abs{y} > 2c\, n \log n$, then we have
    \[
        2c\, n \log n < \log\abs{y} \leq c\, n \log\log\abs{y},
    \]
    which gives $\log n < \frac{1}{2} \log\log\abs{y}$ and therefore $n < \sqrt{\log\abs{y}}$. But going back to the original inequality, then we have
    \[
        \log\abs{y} < c \sqrt{\log\abs{y}} \log\log\abs{y},
    \]
    and therefore $\log\abs{y} = \landauO{1}$, which is even stronger than the assertion.

    In summary, we have the asymptotically almost linear upper bound
    \[
        \log\abs{y} = \landauO{n \log n},
    \]
    and by comparing this with the asymptotically exponential lower bound from Equation~\eqref{eq:logyboundbadj} we get $n = \landauO{1}$, which in turn implies $\log\abs{y} = \landauO{1}$. This concludes our proof of Theorem~\ref{thm:main}.
    
%
%

\begin{bibdiv}
\begin{biblist}

    \bib{baker68}{article}{
        title = {Contributions to the theory of diophantine equations I},
        subtitle = {On the representation of integers by binary forms},
        author = {Baker, A.},
        journal = {Philos. Trans. Royal Soc. A},
        volume = {263},
        year = {1968},
        pages = {173--191}
    }

    \bib{bawh91}{article}{
        title = {Logarithmic forms and group varieties},
        author = {Baker, A.},
        author = {Wüstholz, G.},
        journal = {J. Reine Angew. Math.},
        volume = {442},
        year = {1991},
        pages = {19--62}
    }

    \bib{bugy96}{article}{
        title={Bounds for the solutions of Thue-Mahler equations and norm form equations},
        author={Bugeaud, Y.},
        author = {Gy\H{o}ry, K.},
        journal={Acta Arith.},
        volume={74},
        number={3},
        year={1996},
        pages={273--292}
    }

    \bib{ellison72}{article}{
        title = {The diophantine equation $y^2 + k = x^3$},
        author = {Ellison, W.J.},
        author = {Ellison, F.},
        author = {Pesek, J.},
        author = {Stahl, C.E.},
        journal = {J. Number Theory},
        volume = {4},
        number = {2},
        year = {1972},
        pages = {107--117}
    }

    \bib*{CNT91}{book}{
        title = {Computational Number Theory},
        year = {1991},
        publisher = {De Gruyter},
        address = {Berlin/New York}
    }

    \bib{gaal91}{article}{
        title={On the resolution of some diophantine equations},
        author={Gaál, I.},
        book = {CNT91},
        pages={261--280}
    }

    \bib{gersh31}{article}{
        title={Über die Abgrenzung der Eigenwerte einer Matrix},
        author={Gershgorin, S.},
        journal={Zbl},
        volume={6},
        date={1931},
        pages={749--754}
    }

    \bib{hakolepeti99}{article}{
        title = {Thue equations associated with Ankeny-Brauer-Chowla number fields},
        author = {Halter-Koch, F.},
        author = {Lettl, G.},
        author = {Pethö, A.},
        author = {Tichy, R.F.},
        journal = {J. London Math. Soc.},
        volume = {60},
        number = {1},
        year = {1999},
        pages = {1--20}
    }

    \bib{heub99}{article}{
        title={On Families of Parametrized Thue Equations},
        author={Heuberger, C.},
        journal={J. Number Theory},
        volume={76},
        date={1999},
        pages={45--61}
    }

    \bib{heubtichy99}{article}{
        title = {Effective solution of families of Thue equations containing several parameters},
        author = {Heuberger, C.},
        author = {Tichy, R.F.},
        journal = {Acta Arith.},
        year = {1999},
        volume = {91},
        pages = {147--163}
    }

    \bib{heubConj}{article}{
        title = {On a conjecture of E. Thomas concerning parametrized Thue equations},
        author = {Heuberger, C.},
        journal = {Acta Arith.},
        year = {2001},
        volume = {98},
        number = {4},
        pages = {375--394}
    }

    \bib*{RIMS06}{book}{
        title = {Proceedings of the RIMS symposium “Analytic Number Theory and Surrounding Areas”},
        volume = {1511},
        year = {2006}
    }

    \bib{heub05}{article}{
        title={Parametrized Thue Equations--A survey},
        author={Heuberger, C.},
        book={RIMS06},
        pages={82--91}
    }

    \bib{hilgart22}{article}{
        title = {On a family of cubic Thue equations involving Fibonacci and Lucas numbers},
        author = {Hilgart, T.},
        author = {Vukusic, I.},
        author = {Ziegler, V.},
        journal = {Integers},
        year = {2022},
        volume = {22},
        pages = {Paper No. A31, 20}
    }

    \bib{hilgart23}{article}{
        title = {On families of cubic split Thue equations parametrised by linear recurrence sequences},
        author = {Hilgart, T.},
        journal = {Publ. Math. Debrecen},
        volume = {102},
        year = {2023},
        number = {3-4},
        pages = {439--457}
    }

    \bib{petho87}{article}{
        title = {On the Resolution of Thue Inequalities},
        author = {Pethö, A.},
        journal = {J Symb Comput},
        volume = {4},
        number = {1},
        year = {1987},
        pages = {103--109}
    }

    \bib{pohst77}{article}{
        title={Regulatorabschätzungen für total reelle algebraische Zahlkörper},
        author={Pohst, M.},
        journal={J. Number Theory},
        volume={9},
        date={1977},
        pages={459--492}
    }

    \bib{tho90}{article}{
        title={Complete Solutions to a Family of Cubic Diophantine Equations},
        author={Thomas, E.},
        journal={J. Number Theory},
        volume={34},
        number = {2},
        date={1990},
        pages={235--250}
    }

    \bib{thue09}{article}{
        title = {Über Annäherungswerte algebraischer Zahlen},
        author = {Thue, A.},
        year = {1909},
        journal = {J für die Reine und Angew. Math.},
        pages = {287--305}
    }

    \bib{waldBook}{book}{
        title={Diophantine Approximation on Linear Algebraic Groups},
        author={Waldschmidt, M.},
        year = {2000},
        publisher = {Springer},
        address = {Berlin, Heidelberg}
    }

\end{biblist}
\end{bibdiv}

\end{document}